\numberwithin{equation}{section} 
\numberwithin{figure}{section} 
\newtheorem{thm}{Theorem}[section]
\newtheorem{lem}[thm]{Lemma}
\newtheorem{prop}[thm]{Proposition}
\theoremstyle{definition}
\theoremstyle{remark}
\newtheorem{rem}[thm]{Remark}
\theoremstyle{example}
\numberwithin{equation}{section}
\newcommand{\numberset}{\mathbb}
\newcommand{\N}{\numberset{N}}
\begin{document}

\title[On Strong convergence of Halpern's method]{On Strong convergence of Halpern's method using  averaged type mappings}
\author{ F. Cianciaruso, G. Marino, A. Rugiano, B. Scardamaglia}%
\address{F. Cianciaruso, G. Marino, A. Rugiano, B. Scardamaglia: Dipartimento di Matematica ed Informatica,
Universit\'a della Calabria, 87036 Arcavacata di Rende (CS),
ITALY}
\email{ cianciaruso@unical.it, gmarino@unical.it}%
\email{ rugiano@mat.unical.it, scardamaglia@mat.unical.it}%
%
\subjclass[2010]{47J20,47J25,49J40,65J15} \keywords{ Halpern's
iterations, Nonexpansive mappings, Nonspreading mappings, Averaged
type mappings.}

\begin{abstract}
In this paper, inspired by Iemoto and Takahashi [S. Iemoto, W.
Takahashi, Nonlinear Analysis 71, (2009), 2082-2089], we study the Halpern's method to
approximate strongly fixed points of  a nonexpansive mapping and
of a nonspreading mapping. A crucial tool in our results is the
regularization with the averaged type mappings [C. Byrne, Inverse Probl. 20, (2004),
103-120].
\end{abstract}
\maketitle

\section{Introduction}
Let $H$ be a real Hilbert space with the inner product $\langle\cdot,\cdot\rangle$, which induces the norm $\|\cdot\|$.\\
Let $C$ be  a nonempty, closed and convex subset of $H$.
Let $T$ be a nonlinear mapping of $C$ into itself; we denote with $Fix(T)$ the set of fixed points of $T$, that is, $Fix(T)=\{z\in C: Tz=z\}$.\\
We recall that a mapping $T$ is said to be nonexpansive if
\begin{equation*}
\|Tx-Ty\|\leq \|x-y\|, \quad \forall x,y    \in C.
\end{equation*}

The problem of finding fixed points of nonexpansive mappings has been widely investigated by many authors.\\
For a fixed $u\in C$ and for each $t\in (0,1)$,  let $z_t$ be the unique fixed point of the contraction given by  $$T_tx=tu+(1-t)Tx,\quad x\in C.$$ Namely, we have $z_t=tu+(1-t)Tz_t$. Browder \cite{Browder} proved the following strong convergence theorem.
\begin{thm}\label{browder}
Let $C$ be a nonempty, bounded, closed  and convex subset of a Hilbert space $H$ and let $T: C\rightarrow C$ be a nonexpansive mapping. Fix $u\in C$ and define $z_t\in C$ as $z_t=tu+(1-t)Tz_t$ for $t\in (0,1)$. Then as $t$ tends to $0$, $z_t$ converges strongly to the unique element of $Fix(T)$ nearest to $u$, i.e.
$z_t$ converges strongly to $P_{Fix(T)}u$.
\end{thm}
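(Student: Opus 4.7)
The plan is to prove Browder's theorem by the classical four-step route: well-posedness of $z_t$, asymptotic regularity, extraction of a subsequential weak limit belonging to $Fix(T)$, and identification of this limit as $P_{Fix(T)}u$ via a variational inequality.

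First, for each $t\in(0,1)$ the map $T_tx=tu+(1-t)Tx$ sends $C$ into $C$ (by convexity) and is a $(1-t)$-contraction (since $T$ is nonexpansive), so Banach's fixed point theorem yields a unique $z_t\in C$ with $z_t=tu+(1-t)Tz_t$. Boundedness of $C$ gives boundedness of $\{z_t\}$ and $\{Tz_t\}$ for free, and from $z_t-Tz_t=t(u-Tz_t)$ one reads off the asymptotic regularity $\|z_t-Tz_t\|\to 0$ as $t\to 0$. The existence of fixed points of $T$ (needed in the next steps) follows from the Browder--G\"ohde--Kirk theorem, since $C$ is bounded closed convex in a Hilbert space and $T$ is nonexpansive.

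Next I would derive the key estimate: for every $p\in Fix(T)$,
\begin{equation*}
\|z_t-p\|^2\le \langle u-p,\,z_t-p\rangle.
\end{equation*}
This comes by writing $z_t-p=t(u-p)+(1-t)(Tz_t-p)$, taking inner product with $z_t-p$, using $\langle Tz_t-p,z_t-p\rangle\le \|z_t-p\|^2$ (Cauchy--Schwarz plus nonexpansiveness), and cancelling the factor $t$. Now pick any sequence $t_n\to 0$. Since $C$ is bounded in the Hilbert space $H$, some subsequence $z_{t_{n_k}}$ converges weakly to a point $z^*\in C$. Combining the asymptotic regularity with demiclosedness of $I-T$ at $0$ for nonexpansive mappings yields $z^*\in Fix(T)$. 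Applying the displayed inequality with $p=z^*$ and passing to the limit (the right-hand side tends to $0$ by weak convergence) promotes weak to strong convergence: $z_{t_{n_k}}\to z^*$ in norm.

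Finally, I would identify $z^*$ uniquely. Passing to the limit in the inequality above along $t_{n_k}$ for an arbitrary $p\in Fix(T)$ gives $\langle u-z^*,\,z^*-p\rangle\ge 0$ for every $p\in Fix(T)$, which is precisely the variational characterization of $z^*=P_{Fix(T)}u$. Since the limit point is independent of the chosen sequence $t_n\to 0$, a standard subsequence argument shows that the whole net $z_t$ converges strongly to $P_{Fix(T)}u$. The main conceptual obstacle is the demiclosedness step, but for nonexpansive mappings in Hilbert space this is a classical fact and can be invoked directly; everything else is algebraic manipulation.
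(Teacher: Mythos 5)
Your proof is correct and complete: the contraction argument for existence and uniqueness of $z_t$, the identity $z_t-Tz_t=t(u-Tz_t)$ for asymptotic regularity, the inequality $\|z_t-p\|^2\le\langle u-p,\,z_t-p\rangle$ for $p\in Fix(T)$, demiclosedness of $I-T$ to place weak subsequential limits in $Fix(T)$, and the upgrade to strong convergence plus the variational characterization of $P_{Fix(T)}u$ are exactly the standard route. Note that the paper itself offers no proof to compare against --- it quotes this result from Browder's 1967 article --- so there is nothing to contrast; your argument is the classical one and every step (including the subsequence argument showing the limit is independent of the sequence $t_n\to 0$) is justified.
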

We recall that if $C$ is a nonempty closed convex subset of $H$, then
for every point $x\in H$, there exists a unique nearest point in $C$, denoted by $P_Cx$, such that
$$\|x-P_Cx\|\leq\|x-y\|,\quad \forall y\in C.$$
Such $P_C$ is called the metric projection of $H$ onto $C$.\\
If $T$ is a nonexpansive mapping and  $u\in C$ fixed, Halpern \cite{Halpern} was the first who considered the following explicit method:

\begin{equation}\label{halpern}
x_1\in C, \quad x_{n+1}=\alpha_n u+(1-\alpha_n)Tx_n, \quad \forall n\geq1
\end{equation}
where $(\alpha_n)_{n\in\N}\subset [0,1]$.\\
Moreover, Halpern proved in \cite{Halpern} the following Theorem on the convergence of $(\ref{halpern})$ for a particular choice of  $(\alpha_n)_{n\in\N}$.
\begin{thm}
Let $C$ be a bounded, closed  and convex subset of a Hilbert space $H$ and let $T: C\rightarrow C$ be a nonexpansive mapping. For any initialization $x_1\in C$ and anchor $u\in C$, define a sequence $(x_n)_{n\in\N}$ in $C$ by
$$x_{n+1}=n^{-\theta}u+(1-n^{-\theta})Tx_n, \quad \forall n\geq1,$$
where $\theta\in (0,1)$.
Then $(x_n)_{n\in\N}$ converges strongly to the element of $Fix(T)$ nearest to $u$.
\end{thm}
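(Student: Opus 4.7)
My plan is to follow the classical template for Halpern-type theorems, checking along the way that the specific choice $\alpha_n = n^{-\theta}$, $\theta \in (0,1)$, has the three properties on which the argument rests: $\alpha_n \to 0$, $\sum_n \alpha_n = \infty$ (both immediate from $0 < \theta < 1$), and $\sum_n |\alpha_{n+1} - \alpha_n| < \infty$ (from the mean value bound $|\alpha_{n+1} - \alpha_n| \leq \theta n^{-\theta-1}$ and $\theta + 1 > 1$).

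Since $C$ is bounded, $(x_n)$, $(Tx_n)$ and $(u - Tx_n)$ are bounded in norm by some constant $M$. The first substantive step is asymptotic regularity, $\|x_{n+1} - x_n\| \to 0$. Starting from
\[
x_{n+1} - x_n = (\alpha_n - \alpha_{n-1})(u - Tx_{n-1}) + (1-\alpha_n)(Tx_n - Tx_{n-1}),
\]
nonexpansiveness of $T$ yields
\[
s_{n+1} \leq (1-\alpha_n)\, s_n + M\,|\alpha_n - \alpha_{n-1}|, \qquad s_n := \|x_n - x_{n-1}\|.
\]
A standard recursive lemma (of Xu/Liu type) then gives $s_n \to 0$, using $\sum \alpha_n = \infty$ together with $\sum |\alpha_n - \alpha_{n-1}| < \infty$. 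Combined with $\|x_{n+1} - Tx_n\| = \alpha_n \|u - Tx_n\| \to 0$, this produces $\|x_n - Tx_n\| \to 0$.

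Next I would pin down the candidate limit. By Theorem \ref{browder} the nearest point $z := P_{Fix(T)}u$ exists (in particular $Fix(T) \neq \emptyset$), and to relate the Halpern iterates to $z$ I would establish
\[
\limsup_{n\to\infty}\langle u-z,\; x_n - z\rangle \leq 0.
\]
The usual route is to pick a subsequence realizing the limsup, extract a further weakly convergent sub-subsequence $x_{n_k}\rightharpoonup w$ (available by boundedness in the Hilbert space $H$), invoke demiclosedness at zero of $I-T$ together with $\|x_n - Tx_n\|\to 0$ to conclude $w\in Fix(T)$, and then use the variational characterization $\langle u - z, y - z\rangle \leq 0$ for all $y\in Fix(T)$ of the metric projection.

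For the final step I would use the elementary Hilbert space inequality $\|a+b\|^2 \leq \|a\|^2 + 2\langle b, a+b\rangle$ with $a = (1-\alpha_n)(Tx_n - z)$ and $b = \alpha_n(u-z)$, together with $(1-\alpha_n)^2 \leq 1-\alpha_n$ and nonexpansiveness, to obtain
\[
\|x_{n+1} - z\|^2 \leq (1-\alpha_n)\|x_n - z\|^2 + 2\alpha_n \langle u-z,\; x_{n+1} - z\rangle,
\]
and then apply the same Xu/Liu lemma to $a_n := \|x_n - z\|^2$ to conclude $x_n \to z$. I expect the main obstacle to be the asymptotic regularity step, since that is exactly where the particular rate $\theta\in(0,1)$ enters through the summability of $|\alpha_{n+1}-\alpha_n|$; the demiclosedness argument and the final recursion are then routine once this quantitative input is in hand.
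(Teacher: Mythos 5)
Your proof is correct, but there is nothing in the paper to compare it against: the statement is Halpern's original 1967 theorem, which the paper quotes without proof as background. What you have written is the now-standard modern argument (in the style of Wittmann and Xu) rather than Halpern's original one, which was a rather ad hoc geometric estimate tied specifically to the rate $n^{-\theta}$. Your route isolates exactly the right extra hypothesis: beyond (C1) and (C2), the choice $\alpha_n=n^{-\theta}$ with $\theta\in(0,1)$ satisfies the summability condition $\sum_n|\alpha_{n+1}-\alpha_n|<\infty$, and that is what drives the asymptotic regularity step $\|x_{n+1}-x_n\|\to 0$ via the recursion $s_{n+1}\leq(1-\alpha_n)s_n+M|\alpha_n-\alpha_{n-1}|$ and the Xu-type lemma (Lemma \ref{Xu} of the paper, applied with $\sigma_n=0$). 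The remaining ingredients you invoke are all available in the paper's toolbox: existence of $P_{Fix(T)}u$ (boundedness of $C$ guarantees $Fix(T)\neq\emptyset$), demiclosedness of $I-T$ at $0$ (Lemma \ref{B=I-T}) combined with $\|x_n-Tx_n\|\to 0$ to place weak subsequential limits in $Fix(T)$, the variational characterization of the projection (Lemma \ref{proiezione}) to get $\limsup_n\langle u-z,x_n-z\rangle\leq 0$, and a second application of Lemma \ref{Xu} to the recursion $\|x_{n+1}-z\|^2\leq(1-\alpha_n)\|x_n-z\|^2+2\alpha_n\langle u-z,x_{n+1}-z\rangle$. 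Two minor points worth recording if you write this up in full: the index shift between $\langle u-z,x_n-z\rangle$ and $\langle u-z,x_{n+1}-z\rangle$ is harmless since the limsup condition is invariant under shifting, and $\alpha_1=1$ merely forces $x_2=u$, which causes no difficulty. What your argument buys over Halpern's original proof is generality — it works verbatim for any $(\alpha_n)$ satisfying (C1), (C2) and $\sum|\alpha_{n+1}-\alpha_n|<\infty$, not just $n^{-\theta}$ — at the modest cost of invoking the demiclosedness principle and the recursive lemma.
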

He also showed that the control conditions
\begin{enumerate}
  \item[(C1)] $\displaystyle \lim_{n\rightarrow\infty}\alpha_n=0$;
  \item[(C2)] $\displaystyle\sum_{n=1}^\infty\alpha_n=\infty$,
\end{enumerate}
are necessary  for the convergence of  $(\ref{halpern})$ to a fixed point of $T$.\\
Subsequently, several authors carefully studied the following problem: are the control conditions $(C1)$ and $(C2)$ sufficient  for the convergence of $(\ref{halpern})$?\\
In this direction, C.E. Chidume and  C.O. Chidume
\cite{Chidume-Chidume} and Suzuki \cite{Suzuki}, independently,
proved that the  conditions $(C1)$ and $(C2)$ are sufficient to
assure the strong convergence to a fixed point of $T$ of the
following iterative sequence:
$$ x_1,u\in C; \quad x_{n+1}=\alpha_nu+(1-\alpha_n)(\lambda x_n+(1-\lambda)Tx_n),\quad \forall n\geq1.$$
Recently, in the setting of Banach spaces, Song and Chai \cite{Song-Chai}, under the same
conditions $(C1)$ and $(C2)$, but under stronger hypotheses on the mapping, obtained strong convergence of Halpern
iterations $(\ref{halpern})$. In particular, they assumed that $E$
is a real reflexive Banach space with a uniformly
Gate$\hat{a}$ux differentiable norm and with the fixed point property for nonexpansive self-mappings and considered an important subclass of nonexpansive mappings: the firmly type nonexpansive mappings.\\
Let $T$ be a mapping with domain $D(T)$. T is said to be firmly
type nonexpansive \cite{Song-Chai} if for all $x,y\in D(T)$, there
exists $k\in (0,+\infty)$ such that
\begin{equation*}
\|Tx-Ty\|^2\leq \|x-y\|^2-k\|(x-Tx)-(y-Ty)\|^2.
\end{equation*}

A more general class of firmly type nonexpansive mappings is the
class of the  strongly nonexpansive mappings. Recall that a
mapping $T: C\rightarrow C$ is said to be strongly nonexpansive
if:
\begin{enumerate}
  \item $T$ is nonexpansive;
  \item $x_n-y_n-(Tx_n-Ty_n)\rightarrow 0$, whenever $(x_n)_{n\in\N}$ and $(y_n)_{n\in\N}$ are sequences in $C$ such that $(x_n-y_n)_{n\in\N}$ is bounded and $\|x_n-y_n\|-\|Tx_n-Ty_n\|\rightarrow 0$.
\end{enumerate}
Saejung \cite{Saejung} proved the strong convergence of the
Halpern's iterations $(\ref{halpern})$ for strongly nonexpansive
mappings in a Banach space $E$ such that one of the following
conditions is satisfied:
\begin{itemize}
  \item $E$ is uniformly smooth;
  \item $E$ is reflexive, strictly convex with a uniformly Gate$\hat{a}$ux differentiable norm.
\end{itemize}

In the setting of Hilbert spaces, Kohsaka and Takahashi \cite{Kohsaka-Taka} defined
 $T: C\rightarrow
C$ a nonspreading mapping if:
\begin{equation*}
2\|Tx-Ty\|^2\leq \|Tx-y\|^2+\|x-Ty\|^2, \quad \forall x,y\in C.
\end{equation*}
The following Lemma is an useful characterization of a
nonspreading mapping.
\begin{lem}\cite{Iemoto-Taka}
Let $C$ be a nonempty closed subset of a Hilbert space $H$. Then a mapping $T: C\rightarrow C$ is nonspreading if and only if
\begin{equation}\label{S}
\|Tx-Ty\|^2\leq \|x-y\|^2+2\langle x-Tx, y-Ty\rangle, \quad \forall x,y\in C.
\end{equation}
\end{lem}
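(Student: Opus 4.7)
The plan is to reduce both inequalities to a single algebraic identity that holds in any inner product space, making the equivalence immediate. The key observation is that the three displacements appearing in the definition and in the alleged equivalent form, namely $x-y$, $Tx-y$, $x-Ty$, $Tx-Ty$, can all be expressed in terms of the three basic vectors $a:=x-Tx$, $b:=y-Ty$, and $c:=Tx-Ty$. Concretely, $x-y = a+c-b$, $Tx-y = c-b$, $x-Ty = a+c$, and of course $Tx-Ty=c$.

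The central step is therefore to expand each of the four squared norms via the polarization identity $\|u+v\|^2=\|u\|^2+2\langle u,v\rangle+\|v\|^2$ and combine them. I would compute separately
\begin{equation*}
\|Tx-y\|^2+\|x-Ty\|^2 = 2\|c\|^2+\|a\|^2+\|b\|^2+2\langle a,c\rangle-2\langle b,c\rangle,
\end{equation*}
and
\begin{equation*}
\|x-y\|^2+2\langle a,b\rangle = \|a\|^2+\|b\|^2+\|c\|^2+2\langle a,c\rangle-2\langle b,c\rangle,
\end{equation*}
after which a direct subtraction yields the clean identity
\begin{equation*}
\|Tx-y\|^2+\|x-Ty\|^2-\|Tx-Ty\|^2 = \|x-y\|^2+2\langle x-Tx,\,y-Ty\rangle.
\end{equation*}
This identity is valid for any mapping $T$ and any $x,y\in C$; no hypothesis on $T$ is used.

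Given this identity, both implications of the lemma are trivial rearrangements. The nonspreading inequality $2\|Tx-Ty\|^2\le\|Tx-y\|^2+\|x-Ty\|^2$ is equivalent to $\|Tx-Ty\|^2\le\|Tx-y\|^2+\|x-Ty\|^2-\|Tx-Ty\|^2$, and the right-hand side equals $\|x-y\|^2+2\langle x-Tx,y-Ty\rangle$ by the identity, which is exactly \eqref{S}. Thus the only real work is the algebraic expansion; no nontrivial obstacle appears. If one prefers to avoid the bookkeeping with $a,b,c$, an equivalent approach is to write $Tx-y=(Tx-Ty)+(Ty-y)$ and $x-Ty=(x-Tx)+(Tx-Ty)$, expand the two squared norms, and notice that the resulting cross terms combine into $2\langle x-Tx,y-Ty\rangle$ after using $\|x-y\|^2=\|(x-Tx)+(Tx-Ty)+(Ty-y)\|^2$.
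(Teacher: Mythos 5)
Your argument is correct: the identity $\|Tx-y\|^2+\|x-Ty\|^2-\|Tx-Ty\|^2=\|x-y\|^2+2\langle x-Tx,\,y-Ty\rangle$ checks out (e.g.\ with $a=x-Tx$, $b=y-Ty$, $c=Tx-Ty$ both sides reduce to $\|a\|^2+\|b\|^2+\|c\|^2+2\langle a,c\rangle-2\langle b,c\rangle$ plus $\|c\|^2$), and the equivalence with the nonspreading inequality follows by moving one copy of $\|Tx-Ty\|^2$ across. The paper itself states this lemma as a citation to Iemoto--Takahashi and gives no proof, so there is nothing to compare against; your polarization computation is the standard argument for this fact.
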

Observe that if $T$ is a nonspreading mapping from $C$ into itself
and $Fix(T)\neq\emptyset$, then $T$ is quasi-nonexpansive, i.e.
$$\|Tx-p\|\leq \|x-p\|,\quad \forall
x\in C, \quad \forall p\in Fix(T).$$

Further, the set of fixed points of a quasi-nonexpansive mapping is closed and convex \cite{Ito}.\\
Osilike and Isiogugu \cite{Osilike} studied the Halpern's type for $k-$strictly pseudononspreading mappings $T$, which are a  more general class of the nonspreading mappings.\\
To obtain the strong convergence of (\ref{halpern}) they replaced
the mapping $T$ with the averaged type mapping $A_{T}$, i.e. with the
mapping:
\begin{equation*}
A_T=(1-\delta)I+\delta T,\quad \delta\in (0,1).
\end{equation*}
Iemoto and Takahashi \cite{Iemoto-Taka} approximated common fixed
points of a  nonexpansive mapping $T$ and of a nonspreading
mapping $S$ in a Hilbert space using Moudafi's iterative scheme
\cite{Moudafi}. They obtained the following Theorem that states
the weak convergence of their iterative method:
\begin{thm}
Let $H$ be a Hilbert space and let $C$ be a nonempty closed and convex subset of $H$. Assume that $Fix(S)\cap Fix(T)\neq\emptyset$.
Define a sequence $(x_n)_{n\in\N}$ as follows:
$$
\left\{
\begin{array}{ll}
x_1 \in C \\
x_{n+1}=(1-\alpha_n)x_n+\alpha_n[\beta_n Sx_n+(1-\beta_n)Tx_n],
\end{array}
\right.
$$
for all $n\in \N$, where $(\alpha_n)_{n\in\N}$, $(\beta_n)_{n\in\N}\subset[0,1]$. Then, the following hold:
\begin{enumerate}
  \item[(i)] If $\displaystyle\liminf_{n\rightarrow\infty}\alpha_n(1-\alpha_n)>0$ and $\displaystyle\sum_{n=1}^{\infty}(1-\beta_n)<\infty$, then $(x_n)_{n\in\N}$ converges weakly to $p\in Fix(S)$;
  \item[(ii)] If $\displaystyle\sum_{n=1}^{\infty}\alpha_n(1-\alpha_n)=\infty$ and $\displaystyle\sum_{n=1}^{\infty}\beta_n<\infty$, then $(x_n)_{n\in\N}$ converges weakly to $p\in Fix(T)$;
  \item[(iii)] If $\displaystyle\liminf_{n\rightarrow\infty}\alpha_n(1-\alpha_n)>0$ and $\displaystyle\liminf_{n\rightarrow\infty}\beta_n(1-\beta_n)>0,$ then $(x_n)_{n\in\N}$ converges weakly to $p\in Fix(S)\cap Fix(T)$.
\end{enumerate}
\end{thm}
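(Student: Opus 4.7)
My plan follows the classical Hilbert-space strategy for Mann-type iterates: establish Fejér monotonicity of $(x_n)$ with respect to $Fix(S)\cap Fix(T)$ together with a summability statement, identify every weak cluster point via demiclosedness of $I-S$ and $I-T$ at $0$, and conclude via Opial's property. Throughout, write $y_n:=\beta_nSx_n+(1-\beta_n)Tx_n$, so that $x_{n+1}=(1-\alpha_n)x_n+\alpha_ny_n$.

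The engine is a single inequality. Fix $q\in Fix(S)\cap Fix(T)$. Since $S$ is nonspreading with a fixed point (hence quasi-nonexpansive) and $T$ is nonexpansive, one checks that $\langle Sx_n-x_n,x_n-q\rangle\le-\tfrac12\|Sx_n-x_n\|^2$ and $\langle Tx_n-x_n,x_n-q\rangle\le-\tfrac12\|Tx_n-x_n\|^2$. Expanding $\|x_{n+1}-q\|^2=\|x_n-q\|^2+2\alpha_n\langle y_n-x_n,x_n-q\rangle+\alpha_n^2\|y_n-x_n\|^2$ and using the convexity estimate $\|y_n-x_n\|^2\le\beta_n\|Sx_n-x_n\|^2+(1-\beta_n)\|Tx_n-x_n\|^2$, routine algebra yields
\begin{equation*}
\|x_{n+1}-q\|^2\le\|x_n-q\|^2-\alpha_n(1-\alpha_n)\bigl[\beta_n\|Sx_n-x_n\|^2+(1-\beta_n)\|Tx_n-x_n\|^2\bigr].
\end{equation*}
Fejér monotonicity follows; in particular, $(x_n)$ is bounded, $\lim_n\|x_n-q\|$ exists, and both $\sum_n\alpha_n(1-\alpha_n)\beta_n\|Sx_n-x_n\|^2$ and $\sum_n\alpha_n(1-\alpha_n)(1-\beta_n)\|Tx_n-x_n\|^2$ are finite.

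For case (iii), the $\liminf$ hypotheses make the coefficients $\alpha_n(1-\alpha_n)\beta_n$ and $\alpha_n(1-\alpha_n)(1-\beta_n)$ bounded away from $0$, forcing $\|Sx_n-x_n\|\to 0$ and $\|Tx_n-x_n\|\to 0$. Demiclosedness at $0$ -- Browder's principle for $I-T$, and for $I-S$ a direct consequence of $(\ref{S})$ paired with Opial (the characterization gives $\limsup\|Sx_{n_k}-Sz\|^2\le\limsup\|x_{n_k}-z\|^2$ whenever $x_{n_k}\rightharpoonup z$ and $\|x_{n_k}-Sx_{n_k}\|\to 0$, which collides with the Opial inequality unless $Sz=z$) -- then places every weak cluster point of $(x_n)$ in $Fix(S)\cap Fix(T)$, and Fejér plus Opial locks the weak limit. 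Case (i) is parallel: $\sum(1-\beta_n)<\infty$ gives $\beta_n\to 1$, so $\alpha_n(1-\alpha_n)\beta_n$ is eventually bounded below and $\|Sx_n-x_n\|\to 0$. A companion computation for $p\in Fix(S)$, using that $\|Tx_n-p\|$ is uniformly bounded (via the triangle inequality through $q\in Fix(S)\cap Fix(T)$), furnishes the quasi-Fej\'er bound $\|x_{n+1}-p\|^2\le\|x_n-p\|^2+\alpha_n(1-\beta_n)M$; since $\sum\alpha_n(1-\beta_n)<\infty$, $\|x_n-p\|$ converges for every $p\in Fix(S)$, and Opial finishes.

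The main technical obstacle is case (ii). From the summability above and $\sum\alpha_n(1-\alpha_n)(1-\beta_n)=\infty$ (since $\beta_n\to 0$) one obtains only $\liminf\|x_n-Tx_n\|=0$, because $\alpha_n(1-\alpha_n)$ may degenerate along subsequences. To upgrade this to $\lim_n\|x_n-Tx_n\|=0$, I would exploit $\sum\beta_n<\infty$ and the nonexpansiveness of $T$ to derive
\begin{equation*}
\|x_{n+1}-Tx_{n+1}\|\le(1-\alpha_n\beta_n)\|x_n-Tx_n\|+\alpha_n\beta_n M',
\end{equation*}
so that the positive-part increments of $\|x_n-Tx_n\|$ are summable; combined with $\liminf=0$ the standard lemma for such sequences forces $\|x_n-Tx_n\|\to 0$. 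The proof then closes as in case (i): a quasi-Fej\'er inequality for $p\in Fix(T)$ with summable perturbation of order $\alpha_n\beta_n$ gives convergence of $\|x_n-p\|$, Browder's demiclosedness puts every weak cluster point in $Fix(T)$, and Opial's property determines the weak limit.
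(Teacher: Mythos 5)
This theorem is not proved in the paper you were given: it is quoted verbatim from Iemoto and Takahashi \cite{Iemoto-Taka} as background motivation, and the paper's own contribution (the Halpern-type strong convergence result) is proved by entirely different means. So there is no in-paper proof to compare against; I can only assess your argument on its merits, and it checks out. Your engine inequality is correct: for $q\in Fix(S)\cap Fix(T)$ quasi-nonexpansiveness of $S$ and $T$ gives $\langle Sx_n-x_n,x_n-q\rangle\leq-\tfrac12\|Sx_n-x_n\|^2$ and likewise for $T$, and together with the convexity bound on $\|y_n-x_n\|^2$ this yields $\|x_{n+1}-q\|^2\leq\|x_n-q\|^2-\alpha_n(1-\alpha_n)\bigl[\beta_n\|Sx_n-x_n\|^2+(1-\beta_n)\|Tx_n-x_n\|^2\bigr]$, hence Fej\'er monotonicity and the two summability statements. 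Cases (i) and (iii) then follow as you say, and your Opial-based derivation of the demiclosedness of $I-S$ at $0$ from the characterization (\ref{S}) is exactly the Iemoto--Takahashi argument (Lemma \ref{demicl S} of the paper). The genuinely delicate point is case (ii), where $\sum\alpha_n(1-\alpha_n)=\infty$ only yields $\liminf_n\|x_n-Tx_n\|=0$; your upgrade inequality is valid, since $x_{n+1}-Tx_n=(1-\alpha_n)(x_n-Tx_n)+\alpha_n\beta_n(Sx_n-Tx_n)$ and $\|Tx_n-Tx_{n+1}\|\leq\alpha_n\beta_n\|Sx_n-x_n\|+\alpha_n(1-\beta_n)\|Tx_n-x_n\|$ combine to give $\|x_{n+1}-Tx_{n+1}\|\leq(1-\alpha_n\beta_n)\|x_n-Tx_n\|+\alpha_n\beta_nM'$ with $M'$ finite by boundedness of $(x_n)$, $(Sx_n)$, $(Tx_n)$; the summable positive increments together with $\liminf=0$ indeed force the full limit to vanish. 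The quasi-Fej\'er bounds for $p\in Fix(S)$ (resp.\ $Fix(T)$) with perturbations of order $\alpha_n(1-\beta_n)$ (resp.\ $\alpha_n\beta_n$) give existence of $\lim_n\|x_n-p\|$ for every such $p$, which is what the Opial endgame needs. It is worth noting, as a point of economy, that your Fej\'er engine uses only quasi-nonexpansiveness of $S$; the nonspreading hypothesis enters solely through demiclosedness, which is where a general quasi-nonexpansive map would fail.
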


In this paper, inspired by Iemoto and Takahashi \cite{Iemoto-Taka}, we introduce an iterative method of Halpern's type to approximate strongly fixed points of a nonexpansive mapping $T$ and a nonspreading
 mapping $S$. A crucial tool to prove the strong convergence of our iterative scheme is the use of averaged type mappings  $A_T$ and $A_S$ which have a regularizing role.\\

\section{Preliminaries}
To begin, we collect some Lemmas which we use in our proofs in the next section.\\
Let $H$ be a real Hilbert space.
\begin{lem}\label{Hilbert}
The following known results hold:
\begin{enumerate}
  \item $\|tx+(1-t)y\|^2=t\|x\|^2+(1-t)\|y\|^2-t(1-t)\|x-y\|^2$,\\  for all $x,y\in H$ and for all $t\in [0,1]$.
  \item $\|x+y\|^2\leq\|x\|^2+2\langle y, x+y \rangle$,\\ for all $x,y\in H$.
\end{enumerate}
\end{lem}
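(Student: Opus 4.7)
The plan is to prove both identities by straightforward expansion of inner products, using only bilinearity and the relation $\|v\|^2=\langle v,v\rangle$. Since these are the well-known \emph{polarization/parallelogram} type identities, there is no real obstacle; the only thing to be careful about is keeping the algebra clean and not introducing sign errors.

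For part (1), I would expand
\[
\|tx+(1-t)y\|^2 = t^2\|x\|^2 + 2t(1-t)\langle x,y\rangle + (1-t)^2\|y\|^2,
\]
and rewrite the right-hand side of the claimed identity similarly, using
\[
t\|x\|^2+(1-t)\|y\|^2 - t(1-t)\bigl(\|x\|^2-2\langle x,y\rangle+\|y\|^2\bigr).
\]
Collecting the coefficients of $\|x\|^2$, $\|y\|^2$, and $\langle x,y\rangle$ on both sides and using $t-t(1-t)=t^2$ and $(1-t)-t(1-t)=(1-t)^2$, the two expressions coincide.

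For part (2), I would note that
\[
\|x+y\|^2 = \|x\|^2 + 2\langle x,y\rangle + \|y\|^2,
\]
while
\[
\|x\|^2 + 2\langle y, x+y\rangle = \|x\|^2 + 2\langle x,y\rangle + 2\|y\|^2 = \|x+y\|^2 + \|y\|^2.
\]
Since $\|y\|^2\geq 0$, the inequality $\|x+y\|^2 \leq \|x\|^2 + 2\langle y, x+y\rangle$ follows immediately.

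The hardest part, if one may call it that, is merely bookkeeping the coefficients in (1); no deep idea is required, and both identities are standard in any Hilbert space.
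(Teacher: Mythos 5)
Your proof is correct: both computations check out (the coefficient identities $t-t(1-t)=t^2$ and $(1-t)-t(1-t)=(1-t)^2$ for part (1), and the observation that the slack in part (2) is exactly $\|y\|^2\geq 0$). The paper states this lemma without proof as a collection of known facts, and your direct expansion via bilinearity of the inner product is precisely the standard verification one would supply.
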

The following Lemma \cite{Taka} characterizes the projection $P_C$.
\begin{lem}\label{proiezione}
 Let $C$ be a closed and convex subset of a real Hilbert space and let $P_C$ be the metric projection from $H$ onto $C$. Given $x\in H$ and $z\in C$; then $z=P_Cx$ if and only if there holds the inequality:
 $$\langle x-z, y-z\rangle \leq 0,\quad \forall y\in C.$$
\end{lem}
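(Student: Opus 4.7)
The plan is to prove the equivalence in the usual two directions, exploiting convexity of $C$ and the characterization of the squared distance as a smooth quadratic function along segments.

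For the forward implication, I assume $z=P_Cx$ and fix an arbitrary $y\in C$. Since $C$ is convex, the point $z_t:=(1-t)z+ty=z+t(y-z)$ lies in $C$ for every $t\in[0,1]$. Applying the defining minimality of $z=P_Cx$ to $z_t$ and expanding the square yields
\begin{equation*}
\|x-z\|^2\le\|x-z-t(y-z)\|^2=\|x-z\|^2-2t\langle x-z,y-z\rangle+t^2\|y-z\|^2.
\end{equation*}
Cancelling $\|x-z\|^2$, dividing by $t>0$, and letting $t\to 0^+$ produces $\langle x-z,y-z\rangle\le 0$, which is the required inequality. The only subtlety here is to remember to divide before passing to the limit, so that the quadratic remainder vanishes.

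For the converse implication, I assume $\langle x-z,y-z\rangle\le 0$ for all $y\in C$ and compute, for an arbitrary $y\in C$,
\begin{equation*}
\|x-y\|^2=\|(x-z)+(z-y)\|^2=\|x-z\|^2+2\langle x-z,z-y\rangle+\|z-y\|^2.
\end{equation*}
The middle term equals $-2\langle x-z,y-z\rangle$, which is nonnegative by hypothesis, and $\|z-y\|^2\ge 0$, so $\|x-y\|^2\ge\|x-z\|^2$. Since this holds for every $y\in C$ and $z\in C$, uniqueness of the nearest point forces $z=P_Cx$.

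Neither direction presents a real obstacle; the argument is a standard use of the first-order optimality condition for the convex, differentiable function $y\mapsto\|x-y\|^2$ on $C$. The only point requiring minor care is the passage to the limit $t\to 0^+$ in the forward direction, which must be performed after dividing by $t$ so that the $t^2$ term on the right-hand side does not obscure the linear inequality.
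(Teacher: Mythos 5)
Your proof is correct and complete: the forward direction is the standard variational argument along the segment $z+t(y-z)$ with division by $t$ before letting $t\to 0^+$, and the converse correctly uses the expansion of $\|x-y\|^2$ together with uniqueness of the nearest point. The paper itself does not prove this lemma but simply cites it from Takahashi's book, and your argument is precisely the classical proof given there, so there is nothing to compare beyond noting that your write-up fills in the omitted details accurately.
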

To prove our main Theorem, we need some fundamental properties of involved mappings.\\
The following result summarizes some significant properties
of $I-T$ if $T$ is a  nonexpansive mapping (\cite{Byrne},\cite{Goebel}).
\begin{lem} \label{T demich}\label{B=I-T}
Let $C$ be a nonempty closed convex subset of $H$ and let $T:
C\rightarrow C$ be  nonexpansive. Then: \begin{enumerate}
\item $I-T: C\rightarrow H$ is $\frac{1}{2}$-inverse strongly
monotone, i.e.,
$$\frac{1}{2}\|(I-T)x-(I-T)y\|^2\leq \langle x-y, (I-T)x-(I-T)y\rangle,$$
for all $x,y\in C$;
\item moreover, if $Fix(T)\neq\emptyset$, $I-T$ is
demiclosed at $0$, i.e. for every sequence $(x_n)_{n \in \N}$ weakly
convergent to $p$ such that $x_n-Tx_n\rightarrow 0$ as
$n\rightarrow\infty$, it follows $p\in Fix(T)$.
\end{enumerate}
\end{lem}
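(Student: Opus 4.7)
The plan is to handle the two parts separately, since they rely on different tools. Part (1) is a direct algebraic computation using nonexpansiveness, while part (2) uses the weak lower semicontinuity of the norm (Opial's property of Hilbert spaces) together with nonexpansiveness.

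For part (1), I would start from the simple observation that $(x-y) - \bigl((I-T)x - (I-T)y\bigr) = Tx - Ty$, and then expand the squared norm of $Tx-Ty$ via the inner product identity
\begin{equation*}
\|Tx-Ty\|^2 = \|x-y\|^2 - 2\langle x-y, (I-T)x - (I-T)y\rangle + \|(I-T)x - (I-T)y\|^2.
\end{equation*}
The nonexpansiveness of $T$ gives $\|Tx-Ty\|^2 \leq \|x-y\|^2$. Substituting and rearranging immediately yields the desired inverse strongly monotone inequality with constant $\tfrac{1}{2}$. This step should be entirely routine.

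For part (2), I would assume by contradiction that $p \notin Fix(T)$, i.e.\ $p \neq Tp$. Given a sequence $(x_n)_{n \in \N}$ in $C$ with $x_n \rightharpoonup p$ and $x_n - Tx_n \to 0$, I would apply Opial's property of the Hilbert space $H$, which ensures that the limit point of a weakly convergent sequence is strictly characterized by
\begin{equation*}
\liminf_{n\to\infty}\|x_n - p\| < \liminf_{n\to\infty}\|x_n - z\| \quad \text{for every } z \neq p.
\end{equation*}
Taking $z = Tp$ and then estimating
\begin{equation*}
\|x_n - Tp\| \leq \|x_n - Tx_n\| + \|Tx_n - Tp\| \leq \|x_n - Tx_n\| + \|x_n - p\|,
\end{equation*}
where the last bound uses nonexpansiveness, I can pass to $\liminf$ on both sides. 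Since $\|x_n - Tx_n\| \to 0$, I obtain $\liminf \|x_n - Tp\| \leq \liminf \|x_n - p\|$, contradicting the strict Opial inequality. Hence $p = Tp$.

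The only delicate point is invoking Opial's property, which is a standard fact in Hilbert spaces and can be used without further justification in this setting; once it is available, the contradiction argument is short. No substantial obstacle is expected beyond correctly setting up this contradiction and carefully ordering the triangle inequality and nonexpansive estimate.
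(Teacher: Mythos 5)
Your proposal is correct. Note that the paper does not prove this lemma at all: it is quoted as a known result with citations to Byrne and to Goebel--Kirk, so there is no in-paper argument to compare against. Your part (1) is the standard computation: expanding $\|Tx-Ty\|^2=\|(x-y)-((I-T)x-(I-T)y)\|^2$ and using $\|Tx-Ty\|\leq\|x-y\|$ gives exactly the $\tfrac12$-inverse strong monotonicity, and this is surely how the cited sources obtain it. Your part (2) via Opial's property is also the classical route and is sound; the liminf bookkeeping works because $x_n\rightharpoonup p$ forces $(x_n)$ to be bounded, and $\liminf(\|x_n-Tx_n\|+\|x_n-p\|)=\liminf\|x_n-p\|$ since the first summand tends to $0$. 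Two small points are worth making explicit. First, you should observe that $p\in C$ (so that $Tp$ is defined): this is where the hypothesis that $C$ is closed and convex enters, since such a set is weakly closed. Second, in a Hilbert space you can avoid Opial entirely and deduce (2) directly from (1): applying the inequality of part (1) with $x=x_n$, $y=p$ gives
\begin{equation*}
\tfrac12\|(x_n-Tx_n)-(p-Tp)\|^2\leq\langle x_n-p,\,(x_n-Tx_n)-(p-Tp)\rangle,
\end{equation*}
and letting $n\to\infty$ (using $x_n-Tx_n\to0$ strongly, $x_n-p\rightharpoonup0$, and boundedness of $(x_n)$) the right-hand side tends to $0$ while the left-hand side tends to $\tfrac12\|p-Tp\|^2$, so $p=Tp$. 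This variant makes the lemma self-contained and shows the two assertions are not independent; also note that neither your argument nor this one actually uses the hypothesis $Fix(T)\neq\emptyset$, which is stated in the lemma only for convenience.
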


If $C$ is a nonempty, closed and convex subset of $H$ and $T$ is a
nonlinear mapping of $C$ into itself,  inspired by \cite{Byrne},
we can define the averaged type mapping as follows
\begin{equation}\label{A_T}
A_T=(1-\delta)I+\delta T=I-\delta(I-T)
\end{equation}
where $\delta\in(0,1)$.
We notice that $Fix(T)=Fix(A_T)$ and that if $T$ is a nonexpansive mapping also $A_T$ is nonexpansive.\\

If $S$ is a nonspreading mapping of $C$ into itself and
$Fix(S)\neq\emptyset$, we observe that $A_S$ is
quasi-nonexpansive and further the set of fixed points of $A_S$  is closed and convex.
The following Lemma shows the demiclosedness of $I-S$ at $0$.
\begin{lem}\cite{Iemoto-Taka}\label{demicl S}
Let $C$ be a nonempty, closed and convex subset of $H$. Let $S:
C\rightarrow C$ be a nonspreading mapping such that
$Fix(S)\neq\emptyset$. Then $I-S$ is demiclosed at $0$.
\end{lem}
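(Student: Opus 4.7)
Given a sequence $(x_n)_{n\in\N}\subset C$ with $x_n\rightharpoonup p$ and $x_n-Sx_n\to 0$, the goal is to show $Sp=p$. My plan is to feed $x=x_n$ and $y=p$ into the nonspreading characterization (\ref{S}), then algebraically isolate $\|p-Sp\|^2$ on one side of an inequality whose other side vanishes in the limit. The only facts about weak convergence that I would use are that $(x_n)$ is bounded and that $\langle x_n-p,h\rangle\to 0$ for every fixed $h\in H$.

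Concretely, from (\ref{S}) I obtain
$$\|Sx_n-Sp\|^2\leq \|x_n-p\|^2+2\langle x_n-Sx_n,\, p-Sp\rangle.$$
To draw $\|p-Sp\|^2$ out of the left-hand side, I would write $Sx_n-Sp=(x_n-Sp)-(x_n-Sx_n)$ and expand by the polarization identity $\|a-b\|^2=\|a\|^2-2\langle a,b\rangle+\|b\|^2$; then I would further expand $\|x_n-Sp\|^2=\|(x_n-p)+(p-Sp)\|^2$ in the same way. Substituting back, the terms $\|x_n-p\|^2$ appearing on both sides cancel, yielding
$$\|p-Sp\|^2\leq 2\langle x_n-Sx_n,p-Sp\rangle - 2\langle x_n-p,p-Sp\rangle + 2\langle x_n-Sp,\, x_n-Sx_n\rangle - \|x_n-Sx_n\|^2.$$

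I would then let $n\to\infty$. The first, third and fourth terms on the right vanish because $x_n-Sx_n\to 0$ strongly while $(x_n-Sp)$ remains bounded (as $(x_n)$ is weakly convergent, hence bounded); the second term tends to $0$ because $x_n\rightharpoonup p$ and $p-Sp$ is a fixed vector. Consequently $\|p-Sp\|^2\leq 0$, which forces $Sp=p$, i.e. $p\in Fix(S)$.

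The only real obstacle is bookkeeping: one has to expand $\|Sx_n-Sp\|^2$ in two successive ways so that the $\|x_n-p\|^2$ terms really cancel and the quantity $\|p-Sp\|^2$ to be controlled is isolated on the left. Once this is arranged, nothing beyond the nonspreading inequality, the boundedness of weakly convergent sequences and the defining property of weak convergence is needed; in particular, no boundedness assumption on $C$ and no use of the fixed point set $Fix(S)$ beyond its nonemptiness (which is implicit in the statement) is required.
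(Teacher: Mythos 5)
Your argument is correct: writing $Sx_n-Sp=(x_n-Sp)-(x_n-Sx_n)$, expanding, and further expanding $\|x_n-Sp\|^2=\|(x_n-p)+(p-Sp)\|^2$ does cancel the $\|x_n-p\|^2$ terms against the nonspreading inequality (\ref{S}), and each remaining term on the right vanishes for exactly the reasons you give (strong convergence $x_n-Sx_n\to 0$, boundedness of the weakly convergent sequence, and weak convergence tested against the fixed vector $p-Sp$), so $\|p-Sp\|^2\le 0$ and $p\in Fix(S)$. The paper only cites this lemma from Iemoto--Takahashi, and the proof there proceeds by essentially the same expansion of (\ref{S}) around the weak limit; as you observe, the argument does not actually use the hypothesis $Fix(S)\neq\emptyset$.
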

In the sequel we use the following property of $I-S$.
\begin{lem}\cite{Iemoto-Taka}\label{I-S}
Let $C$ be a nonempty, closed and convex subset of $H$. Let $S: C\rightarrow C$ be a nonspreading mapping. Then
$$\|(I-S)x-(I-S)y\|^2\leq \langle x-y, (I-S)x-(I-S)y\rangle+\frac{1}{2}\bigg(\|x-Sx\|^2+\|y-Sy\|^2\bigg),$$
for all $x,y\in C$.
\end{lem}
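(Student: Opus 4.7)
The plan is to deduce this inequality directly from the characterization of the nonspreading property given in (\ref{S}), by re-expressing $Sx-Sy$ in terms of $(I-S)x-(I-S)y$ and using a standard polarization identity. To streamline the bookkeeping I would introduce $a := x - Sx$ and $b := y - Sy$, so that the claim becomes
$$\|a-b\|^2 \leq \langle x-y,\, a-b\rangle + \tfrac{1}{2}\bigl(\|a\|^2 + \|b\|^2\bigr).$$

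First I would write $Sx - Sy = (x-y) - (a-b)$ and expand
$$\|Sx-Sy\|^2 = \|x-y\|^2 - 2\langle x-y,\, a-b\rangle + \|a-b\|^2.$$
Then I would plug in the nonspreading characterization (\ref{S}), namely $\|Sx-Sy\|^2 \leq \|x-y\|^2 + 2\langle a, b\rangle$, which after cancellation of the $\|x-y\|^2$ terms gives
$$\|a-b\|^2 \leq 2\langle x-y,\, a-b\rangle + 2\langle a, b\rangle.$$

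Finally I would apply the polarization identity $2\langle a,b\rangle = \|a\|^2 + \|b\|^2 - \|a-b\|^2$ (which is the $t=1/2$ instance of Lemma \ref{Hilbert}(1)), move the resulting $\|a-b\|^2$ to the left-hand side, and divide by two to conclude. The argument is essentially one computation; the only real decision is choosing to expand $Sx - Sy$ against $x-y$ rather than the other way around, so that the inequality from (\ref{S}) can be inserted cleanly and the term $2\langle a,b\rangle$ can then be reabsorbed into $\|a\|^2 + \|b\|^2$ via polarization. I do not anticipate any genuine analytic obstacle beyond this bookkeeping.
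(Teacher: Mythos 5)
Your proof is correct: the identity $Sx-Sy=(x-y)-\bigl((I-S)x-(I-S)y\bigr)$, the expansion of $\|Sx-Sy\|^2$, the insertion of the characterization (\ref{S}), and the final use of $2\langle a,b\rangle=\|a\|^2+\|b\|^2-\|a-b\|^2$ all check out and yield exactly the stated inequality. The paper itself gives no proof of this lemma (it is quoted from Iemoto and Takahashi), but your derivation is the natural one from (\ref{S}) and matches the argument in that reference.
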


If $Fix(S)$ is nonempty, Osilike and Isiogugu \cite{Osilike}
proved that the averaged type mapping $A_S$ is  quasi-firmly type
nonexpansive mapping, i.e. is a firmly type nonexpansive mapping
on fixed points of $S$. On the same line of the proof in  \cite{Osilike}, we prove the following:
\begin{prop}
Let $C$ be a nonempty closed and convex subset of $H$ and let
$S:C\to C$ be a nonspreading mapping such that $Fix(S)$ is
nonempty. Then the averaged type mapping $A_S$
\begin{equation}\label{A_S1} A_S=(1-\delta)I+\delta S,
\end{equation}is quasi-firmly type nonexpansive
mapping with coefficient $k=\left(1-\delta\right)\in (0,1)$.
\end{prop}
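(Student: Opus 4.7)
The plan is to unwind the definition: quasi-firmly type nonexpansive with coefficient $k=1-\delta$ means that for every $x\in C$ and every $p\in Fix(S)=Fix(A_S)$,
\[
\|A_Sx-p\|^2\leq \|x-p\|^2-(1-\delta)\|x-A_Sx\|^2.
\]
So I fix such $x$ and $p$ and compute $\|A_Sx-p\|^2$ directly by writing $A_Sx-p=(1-\delta)(x-p)+\delta(Sx-p)$ and applying the parallelogram-type identity of Lemma \ref{Hilbert}(1) with $t=1-\delta$. This yields
\[
\|A_Sx-p\|^2=(1-\delta)\|x-p\|^2+\delta\|Sx-p\|^2-\delta(1-\delta)\|x-Sx\|^2.
\]

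Next, I exploit that a nonspreading $S$ with $Fix(S)\neq\emptyset$ is automatically quasi-nonexpansive (this is already recorded in the excerpt), so $\|Sx-p\|^2\leq\|x-p\|^2$. Substituting collapses the first two terms and leaves
\[
\|A_Sx-p\|^2\leq \|x-p\|^2-\delta(1-\delta)\|x-Sx\|^2.
\]

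To finish, I convert $\|x-Sx\|^2$ into $\|x-A_Sx\|^2$. Since by the very definition \eqref{A_S1} one has $x-A_Sx=\delta(x-Sx)$, it follows that $\|x-Sx\|^2=\delta^{-2}\|x-A_Sx\|^2$. Plugging this in gives
\[
\|A_Sx-p\|^2\leq \|x-p\|^2-\tfrac{1-\delta}{\delta}\|x-A_Sx\|^2,
\]
and since $\delta\in(0,1)$ implies $\tfrac{1-\delta}{\delta}\geq 1-\delta$, we obtain the desired inequality with $k=1-\delta\in(0,1)$.

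There is no real obstacle here: the argument is a two-line computation plus one invocation of the quasi-nonexpansivity of $S$. The only conceptual point worth being careful about is remembering that the definition of \emph{quasi}-firmly type nonexpansive requires the inequality only when one variable is a fixed point, which is exactly what allows us to use $\|Sx-p\|\leq\|x-p\|$ instead of a full firm-nonexpansiveness estimate (which $S$ itself need not satisfy, as Lemma \ref{I-S} shows by producing the extra error term $\tfrac12(\|x-Sx\|^2+\|y-Sy\|^2)$ that vanishes precisely when $y$ is a fixed point).
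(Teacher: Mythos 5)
Your proof is correct, and it reaches the paper's conclusion by a slightly different, more economical route. The paper first establishes a two-variable estimate: it expands $\|A_Sx-A_Sy\|^2$ for arbitrary $x,y\in C$ via Lemma \ref{Hilbert}(1), invokes the full nonspreading characterization (\ref{S}) (keeping the inner-product term), and rewrites everything in terms of $I-A_S$ to get the inequality (\ref{A_Sx-A_Sy}) with the cross term $\frac{2}{\delta}\langle x-A_Sx,\,y-A_Sy\rangle$; the quasi-firmly type inequality (\ref{A_s quasi-firmly}) then follows by setting $y=p$, which annihilates that term. You instead specialize to $y=p$ from the start, so the only consequence of nonspreadingness you need is the quasi-nonexpansivity $\|Sx-p\|\leq\|x-p\|$ (recorded in the Introduction, and itself just (\ref{S}) with $y=p$); the same convexity identity, the identity $x-A_Sx=\delta(x-Sx)$, and the same final loosening $\frac{1-\delta}{\delta}\geq 1-\delta$ then give (\ref{A_s quasi-firmly}) in two lines. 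What your version buys is brevity and a clear isolation of exactly what the \emph{quasi} statement requires; what the paper's version buys is the stronger intermediate inequality (\ref{A_Sx-A_Sy}), valid for all $x,y\in C$ rather than only at fixed points, which measures precisely how far $A_S$ is from being firmly type nonexpansive in the unrestricted sense. Both arguments are sound and yield the same coefficient $k=1-\delta$.
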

\begin{proof}
We obtain
\begin{align*}
\left\|A_Sx-A_Sy\right\|^{2}&=~\left\|\left(1-\delta\right)\left(x-y\right)+\delta\left(Sx-Sy\right)\right\|^{2}\\
\mbox{ (by Lemma \ref{Hilbert}) }&=~\left(1-\delta\right)\left\|x-y\right\|^{2}+\delta\left\|Sx-Sy\right\|^{2}\\  &-~\delta\left(1-\delta\right)\left\|\left(x-Sx\right)-\left(y-Sy\right)\right\|^{2}\\
\mbox{ (by (\ref{S})) }&\leq~\left(1-\delta\right)\left\|x-y\right\|^{2}+\delta\left[\left\|x-y\right\|^{2}+2\left\langle x-Sx,y-Sy\right\rangle\right]\\
&-~\delta\left(1-\delta\right)\left\|\left(x-Sx\right)-\left(y-Sy\right)\right\|^{2}\\ \nonumber
&=~\left\|x-y\right\|^{2}+\frac{2}{\delta}\left\langle \delta\left(x-Sx\right),\delta\left(y-Sy\right)\right\rangle\\
&-~\frac{1-\delta}{\delta}\left\|\delta\left(x-Sx\right)-\delta\left(y-Sy\right)\right\|^{2}\\
\mbox{ (by (\ref{A_S1})) }&=~\left\|x-y\right\|^{2}+\frac{2}{\delta}\left\langle x-A_Sx,y-A_Sy\right\rangle\\
&-~\frac{1-\delta}{\delta}\left\|\left(x-A_Sx\right)-\left(y-A_Sy\right)\right\|^{2}\\
&\leq~\left\|x-y\right\|^{2}+\frac{2}{\delta}\left\langle x-A_Sx,y-A_Sy\right\rangle\\
&-~\left(1-\delta\right)\left\|\left(x-A_Sx\right)-\left(y-A_Sy\right)\right\|^{2}.
\end{align*}
Hence, we have
\begin{equation}\label{A_Sx-A_Sy}
\|A_Sx-A_Sy\|^{2}\leq\|x-y\|^{2}+\frac{2}{\delta}\langle x-A_Sx,y-A_Sy\rangle-(1-\delta)\|(x-A_Sx)-(y-A_Sy)\|^{2}.
\end{equation}
In particular, choosing  in (\ref{A_Sx-A_Sy}) $y=p$, where $p\in
Fix(S)=Fix(A_S)$ we obtain
\begin{equation}\label{A_s quasi-firmly}
\left\|A_Sx-p\right\|^{2}\leq\left\|x-p\right\|^{2}-\left(1-\delta\right)\left\|x-A_Sx\right\|^{2}.
\end{equation}
\end{proof}

A pertinent tool for us is the well-known Lemma of Xu \cite{Xu}.
\begin{lem}\label{Xu}
Let $(a_n)_{n\in\N}$ be a sequence of non-negative real numbers satisfying the following relation:
$$a_{n+1}\leq(1-\alpha_n)a_n+\alpha_n\sigma_n+\gamma_n, \quad n\geq 0,$$
where,
\begin{itemize}
  \item $(\alpha_n)_{n\in\N}\subset [0,1]$, $\displaystyle\sum_{n=1}^\infty \alpha_n=\infty$;
  \item $\displaystyle\limsup_{n\rightarrow\infty}\sigma_n\leq 0$;
  \item $\gamma_n\geq 0$, $\displaystyle\sum_{n=1}^\infty \gamma_n<\infty$.
\end{itemize}
Then, $$\lim_{n\rightarrow\infty}a_n=0.$$
\end{lem}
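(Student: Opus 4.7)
The plan is to reduce to a classical telescoping argument by exploiting the three hypotheses in sequence: the summability of $(\gamma_n)$ kills tail contributions from the $\gamma_n$ term, the condition $\limsup \sigma_n \le 0$ lets us bound $\sigma_n$ by an arbitrary $\eps>0$ from some index on, and finally $\sum \alpha_n=\infty$ drives the compounded factor $\prod(1-\alpha_k)$ to $0$.

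Concretely, fix $\eps>0$. Since $\limsup_n \sigma_n \le 0$ and $\sum \gamma_n<\infty$, I can choose an integer $N$ so large that $\sigma_n \le \eps$ for all $n\ge N$ and $\sum_{k\ge N}\gamma_k \le \eps$. For $n\ge N$ the hypothesis then gives
\[
a_{n+1}\le (1-\alpha_n)a_n+\alpha_n \eps+\gamma_n.
\]
Iterating this inequality from $N$ up to $m-1$ (an easy induction on $m$), I obtain
\[
a_m \le \Bigl(\prod_{k=N}^{m-1}(1-\alpha_k)\Bigr) a_N + \eps \sum_{k=N}^{m-1}\alpha_k \prod_{j=k+1}^{m-1}(1-\alpha_j) + \sum_{k=N}^{m-1}\gamma_k \prod_{j=k+1}^{m-1}(1-\alpha_j).
\]
Each product $\prod_{j=k+1}^{m-1}(1-\alpha_j)$ lies in $[0,1]$, and the telescoping identity $\sum_{k=N}^{m-1}\alpha_k \prod_{j=k+1}^{m-1}(1-\alpha_j) = 1-\prod_{k=N}^{m-1}(1-\alpha_k) \le 1$ controls the middle sum by $\eps$. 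The last sum is dominated by $\sum_{k\ge N}\gamma_k\le \eps$. So
\[
a_m \le \Bigl(\prod_{k=N}^{m-1}(1-\alpha_k)\Bigr) a_N + 2\eps.
\]

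The remaining step is to show $\prod_{k=N}^{m-1}(1-\alpha_k)\to 0$ as $m\to\infty$. Using $1-x\le e^{-x}$ for $x\in[0,1]$, one gets
\[
\prod_{k=N}^{m-1}(1-\alpha_k) \le \exp\Bigl(-\sum_{k=N}^{m-1}\alpha_k\Bigr),
\]
and the right-hand side tends to $0$ by hypothesis $\sum \alpha_n=\infty$. Consequently $\limsup_{m\to\infty} a_m \le 2\eps$, and since $\eps$ was arbitrary and $a_m\ge 0$, I conclude $\lim_{m\to\infty} a_m=0$.

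I do not foresee a real obstacle here; the proof is a standard unrolling of a linear recursion. The only bookkeeping point that could be mishandled is the telescoping identity for $\sum \alpha_k \prod(1-\alpha_j)$, which is essential to bound the $\eps$-contribution uniformly in $m$ without needing $(\alpha_n)$ to be small individually.
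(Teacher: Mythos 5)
Your proof is correct: the unrolling of the recursion, the telescoping identity $\sum_{k=N}^{m-1}\alpha_k\prod_{j=k+1}^{m-1}(1-\alpha_j)=1-\prod_{k=N}^{m-1}(1-\alpha_k)$, and the estimate $\prod(1-\alpha_k)\le\exp\bigl(-\sum\alpha_k\bigr)$ are all sound, and the choice of $N$ handling both $\sigma_n\le\eps$ and $\sum_{k\ge N}\gamma_k\le\eps$ simultaneously is legitimate. The paper itself gives no proof --- it quotes this as the well-known lemma of Xu --- and your argument is essentially the standard one from that source, so there is nothing further to compare.
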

Finally, a crucial tool for our results is the following Lemma
proved by Maing\'{e}.

\begin{lem}\cite{Mainge}  \label{LemMaing}
Let $(\gamma_n)_{n\in\N}$ be a sequence of real numbers such that there exists a subsequence $(\gamma_{n_j})_{j\in\N}$ of $(\gamma_n)_{n\in\N}$ such that $\gamma_{n_j}<\gamma_{n_j+1}$, for all $j\in\N$. Then, there exists a nondecreasing sequence $(m_k)_{k\in\N}$ of $\N$ such that $\displaystyle \lim_{k\to\infty}m_k=\infty$ and the following properties are satisfied by all (sufficiently large) numbers $k\in\N$:
$$\gamma_{m_k}\leq\gamma_{m_k+1}\quad \mbox{ and } \quad \gamma_k\leq \gamma_{m_k+1}.$$
In fact, $m_k$ is the largest number $n$ in the set $\{1,...,k\}$ such that the condition $\gamma_n<\gamma_{n+1}$ holds.
\end{lem}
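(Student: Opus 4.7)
The plan is to take the description in the final sentence of the statement as the definition and verify that it does the job. Set
\[
J_k \;=\; \{\, n \in \{1,\ldots,k\} : \gamma_n < \gamma_{n+1}\,\},
\]
and, once $J_k \neq \emptyset$, let $m_k := \max J_k$. The hypothesis furnishes indices $n_1 < n_2 < \cdots$ with $\gamma_{n_j} < \gamma_{n_j+1}$, so $J_k \neq \emptyset$ for every $k \geq n_1$; for $k < n_1$ one may set $m_k := n_1$ arbitrarily, since the stated conclusions are only asked for sufficiently large $k$.

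Once $m_k$ is defined this way, three of the four required properties are essentially free. Monotonicity of $(m_k)$: as $k$ increases, $J_k$ only grows, so $m_k = \max J_k$ is nondecreasing. Divergence $m_k \to \infty$: given any $N$, choose $j$ with $n_j > N$; then $n_j \in J_k$ for every $k \geq n_j$, so $m_k \geq n_j > N$. The inequality $\gamma_{m_k} \leq \gamma_{m_k+1}$ (in fact strict) is built into the very condition defining $J_k$.

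The only step with any real content is the second inequality $\gamma_k \leq \gamma_{m_k+1}$, which I would establish by splitting on whether $m_k = k$ or $m_k < k$. If $m_k = k$, this is the same as $\gamma_{m_k} \leq \gamma_{m_k+1}$ already noted. If $m_k < k$, then by maximality no $n$ with $m_k < n \leq k$ lies in $J_k$, which means $\gamma_{n+1} \leq \gamma_n$ for every such $n$. Telescoping from $n = k$ down to $n = m_k + 1$ yields
\[
\gamma_k \;\leq\; \gamma_{k-1} \;\leq\; \cdots \;\leq\; \gamma_{m_k+1},
\]
which is the desired inequality.

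There is no genuine obstacle; the argument is essentially bookkeeping, and the only subtle point worth articulating is that $J_k$ can be empty for small $k$, which is precisely why the statement restricts the conclusions to sufficiently large $k$. The maximum defining $m_k$ is unambiguous because $J_k$, being a subset of $\{1,\ldots,k\}$, is finite whenever nonempty.
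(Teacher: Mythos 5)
Your proof is correct. The paper states this lemma without proof (it is quoted from Maing\'{e}), and your argument --- taking $m_k$ to be the largest $n\in\{1,\dots,k\}$ with $\gamma_n<\gamma_{n+1}$, noting $J_k\subseteq J_{k+1}$ for monotonicity, and telescoping the reversed inequalities $\gamma_{n+1}\leq\gamma_n$ over the gap $m_k<n\leq k-1$ to get $\gamma_k\leq\gamma_{m_k+1}$ --- is precisely the construction indicated in the lemma's final sentence, i.e.\ Maing\'{e}'s original argument.
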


\section{The Main result}

\begin{thm}
Let $H$ be a Hilbert space and let $C$ be a nonempty closed and
convex subset of $H$. Let $T:C \to C$ be a nonexpansive mapping
and let $S:C\to C$ be a nonspreading mapping such that $Fix(S)\cap
Fix(T)\neq\emptyset$. Let $A_T$ and $A_S$ be the averaged type mappings,
i.e. $$A_T=(1-\delta)I+\delta T,\,\,A_S=(1-\delta)I+\delta S, \,
\delta \in (0,1).$$ Suppose that $(\alpha_n)_{n\in\N}$ is a real
sequence in $(0,1)$ satisfying the conditions:
\begin{enumerate}
  \item $\displaystyle \lim_{n\rightarrow\infty}\alpha_n=0$,
  \item $\displaystyle \sum_{n=1}^\infty \alpha_n=\infty$.
\end{enumerate}
If  $(\beta_n)_{n\in\N}$ is a sequence in $[0,1]$, we define a
sequence $(x_n)_{n\in\N}$ as follows:
$$
\left\{
\begin{array}{ll}
x_1 \in C \\
x_{n+1}=\alpha_n u+(1-\alpha_n)[\beta_n
A_Tx_n+(1-\beta_n)A_Sx_n],\quad n \in \N.
\end{array}
\right.
$$
Then, the following hold:
\begin{enumerate}
  \item[(i)] If $\displaystyle\sum_{n=1}^{\infty}(1-\beta_n)<\infty$, then $(x_n)_{n\in\N}$ converges strongly to $p\in Fix(T)$;
  \item[(ii)] If $\displaystyle\sum_{n=1}^{\infty}\beta_n<\infty$, then $(x_n)_{n\in\N}$ converges strongly to $p\in Fix(S)$;
  \item[(iii)] If $\displaystyle\liminf_{n\to\infty}\beta_n(1-\beta_n)>0$, then $(x_n)_{n\in\N}$ converges strongly to $p\in Fix(T)\cap Fix(S)$.
\end{enumerate}
\end{thm}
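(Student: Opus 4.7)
The plan is to handle the three cases through a unified Halpern/Maing\'e scheme. Let $p$ be the metric projection of the anchor $u$ onto the relevant fixed point set: $Fix(T)$ in case (i), $Fix(S)$ in case (ii), $Fix(T)\cap Fix(S)$ in case (iii). Writing $W_n=\beta_n A_Tx_n+(1-\beta_n)A_Sx_n$, so that $x_{n+1}=\alpha_n u+(1-\alpha_n)W_n$, the first task is boundedness of $(x_n)$. In case (iii), both $A_T$ and $A_S$ fix $p$, and nonexpansivity of $A_T$ together with quasi-nonexpansivity of $A_S$ give $\|W_n-p\|\leq\|x_n-p\|$, so the standard convex-combination induction yields $\|x_n-p\|\leq\max\{\|u-p\|,\|x_1-p\|\}$. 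In (i) and (ii) the ``wrong'' averaged mapping is absorbed via
\[\|W_n-p\|\leq\|A_Tx_n-p\|+(1-\beta_n)\|A_Sx_n-A_Tx_n\|\]
(or the symmetric estimate), producing a perturbation that is summable under the $\ell^{1}$ hypothesis on $1-\beta_n$ (resp.\ $\beta_n$), whence a Gronwall-type induction still gives boundedness.

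The quantitative heart is a single recursion for $a_n:=\|x_n-p\|^2$. Combining the identity of Lemma \ref{Hilbert}(1) both at the outer level (convex combination defining $W_n$) and at the inner level (the averaged expansion of $A_T$ and $A_S$), together with the nonexpansivity of $T$ and the quasi-firm type estimate $\|A_Sx_n-p\|^2\leq\|x_n-p\|^2-(1-\delta)\|x_n-A_Sx_n\|^2$ established in the proposition above, produces in case (iii) the bound
\begin{multline*}
\|W_n-p\|^2\leq\|x_n-p\|^2-\beta_n\tfrac{1-\delta}{\delta}\|x_n-A_Tx_n\|^2\\
-(1-\beta_n)(1-\delta)\|x_n-A_Sx_n\|^2-\beta_n(1-\beta_n)\|A_Tx_n-A_Sx_n\|^2,
\end{multline*}
with analogues perturbed by a summable error in (i) and (ii). Applying Lemma \ref{Hilbert}(2) to $x_{n+1}-p=\alpha_n(u-p)+(1-\alpha_n)(W_n-p)$ and using $(1-\alpha_n)^2\leq 1-\alpha_n$ yields the master inequality
\[a_{n+1}\leq(1-\alpha_n)a_n-(1-\alpha_n)^2 E_n+2\alpha_n\langle u-p,x_{n+1}-p\rangle+\gamma_n,\]
where $E_n\geq 0$ collects the error terms specific to the case and $\sum\gamma_n<\infty$.

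Convergence is extracted through the Maing\'e dichotomy of Lemma \ref{LemMaing} applied to $a_n$. On the branch where $a_n$ is eventually nonincreasing, convergence of $a_n$ combined with $\alpha_n\to 0$ and $\sum\gamma_n<\infty$ forces $E_n\to 0$; in (iii) the hypothesis $\liminf\beta_n(1-\beta_n)>0$ simultaneously pins down $\|x_n-A_Tx_n\|\to 0$ and $\|x_n-A_Sx_n\|\to 0$, whereas in (i)--(ii) only the surviving term is needed. Since $\|x_n-Tx_n\|=\delta^{-1}\|x_n-A_Tx_n\|$ and similarly for $S$, the demiclosedness results of Lemma \ref{T demich}(2) and Lemma \ref{demicl S} identify every weak cluster point of $(x_n)$ as a member of the relevant fixed-point set; Lemma \ref{proiezione} then gives $\limsup\langle u-p,x_{n+1}-p\rangle\leq 0$, and Xu's Lemma \ref{Xu} closes the argument. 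On the complementary branch, Maing\'e's lemma supplies indices $m_k$ with $a_{m_k}\leq a_{m_k+1}$ and $a_k\leq a_{m_k+1}$; substituting into the master inequality and dividing by $\alpha_{m_k}$ forces $E_{m_k}\to 0$, the same demiclosedness argument yields $\limsup_k\langle u-p,x_{m_k+1}-p\rangle\leq 0$, and the inequality then rearranges to $a_{m_k+1}\to 0$, whence $a_k\to 0$ via the comparison.

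The main obstacle is the bookkeeping required to run a uniform argument across the three cases while cleanly separating the summable perturbation (absorbed into $\gamma_n$) from the genuine negative term $E_n$ that must survive the Maing\'e estimate. A minor technicality, needed to transfer $\limsup\langle u-p,x_{m_k+1}-p\rangle$ to the weak cluster points of $(x_{m_k})$, is the estimate $\|x_{m_k+1}-x_{m_k}\|\leq\alpha_{m_k}\|u-W_{m_k}\|+\|W_{m_k}-x_{m_k}\|\to 0$, which follows once $E_{m_k}\to 0$ has been established and $(x_n)$ is known to be bounded.
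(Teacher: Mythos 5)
Your case (iii) is essentially the paper's argument (Maing\'e dichotomy on $\|x_n-P_Fu\|$, the quasi-firm estimate for $A_S$, demiclosedness, Xu), and your sharpening via the averaged inequality $\|A_Tx-p\|^2\le\|x-p\|^2-\frac{1-\delta}{\delta}\|x-A_Tx\|^2$ is correct. The problem is the attempt to run the \emph{same} dichotomy in cases (i) and (ii), where the master inequality unavoidably carries a merely summable perturbation $\gamma_n$ (coming from the ``wrong'' mapping, e.g. $\gamma_n\le M(1-\beta_n)$ in (i)). On the first branch (eventually nonincreasing $a_n$) your argument is fine, because Xu's Lemma \ref{Xu} tolerates a summable $\gamma_n$. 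But on the Maing\'e branch the step ``the inequality then rearranges to $a_{m_k+1}\to 0$'' breaks down: from $a_{m_k}\le a_{m_k+1}$ and
\[
a_{m_k+1}\le(1-\alpha_{m_k})a_{m_k}+2\alpha_{m_k}\langle u-p,x_{m_k+1}-p\rangle+\gamma_{m_k}
\]
you only get $a_{m_k+1}\le 2\langle u-p,x_{m_k+1}-p\rangle+\gamma_{m_k}/\alpha_{m_k}$, and the hypotheses $\sum_n\gamma_n<\infty$, $\alpha_n\to0$, $\sum_n\alpha_n=\infty$ do \emph{not} imply $\gamma_{m_k}/\alpha_{m_k}\to0$ (take $\alpha_n=1/n$ and $\gamma_n=1/n$ on the dyadic integers, $0$ elsewhere; nothing prevents the Maing\'e indices from landing there). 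So on that branch you cannot conclude $a_{m_k+1}\to0$, and hence not $a_k\to0$, unless you additionally assume something like $1-\beta_n=o(\alpha_n)$ (resp. $\beta_n=o(\alpha_n)$), which is not part of the statement. This is why the paper treats (i) and (ii) by a different route: it first proves the asymptotic regularity $\|x_n-Tx_n\|\to0$ (resp. $\|x_n-Sx_n\|\to0$) \emph{globally}, by a telescoping/summation argument that exploits $\sum(1-\beta_n)<\infty$ (resp. $\sum\beta_n<\infty$) directly (its cases a) and b) for $L_n$), then gets the $\limsup$ condition along the whole sequence and finishes with Xu's lemma alone, where the summable term sits harmlessly in the $\gamma_n$ slot and is never divided by $\alpha_n$. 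Your proof of (i)--(ii) can be repaired by doing the same: establish $E_n\to0$ for the full sequence from the summability hypothesis, and drop the dichotomy there entirely.

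A smaller point: in (i)--(ii) your boundedness argument is circular as written, since the ``summable perturbation'' $(1-\beta_n)\|A_Sx_n-A_Tx_n\|$ can only be declared bounded after $(x_n)$ is known to be bounded. The clean fix is the paper's: take $q\in Fix(T)\cap Fix(S)$ (nonempty by hypothesis), note $U_n=\beta_nA_T+(1-\beta_n)A_S$ is quasi-nonexpansive at $q$, and run the standard induction; this gives boundedness uniformly in all three cases, after which your perturbation estimates are legitimate.
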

\begin{proof}
We begin to prove that $(x_n)_{n\in\N}$ is bounded.\\
Put
\begin{equation}\label{U}
U_n=\beta_n A_T+(1-\beta_n)A_S.
\end{equation}
Notice that $U_n$ is quasi-nonexpansive, for all $n\in\N$.\\
For $q\in Fix(T)\cap Fix(S)$, we have
\begin{eqnarray}\label{x_n-q}
\nonumber\|x_{n+1}-q\| & = & \|\alpha_n (u-q)+(1-\alpha_n)(U_nx_n-q)\|\\
\nonumber& \leq & \alpha_n \|u-q\|+(1-\alpha_n)\|U_nx_n-q\|\\
& \leq &  \alpha_n \|u-q\|+(1-\alpha_n)\|x_n-q\|
\end{eqnarray}
Since $$\|x_1-q\|\leq \max \{\|u-q\|, \|x_1-q\|\},$$
and by induction we  assume that
$$\|x_n-q\|\leq \max \{\|u-q\|, \|x_1-q\|\},$$
then
\begin{eqnarray*}
\|x_{n+1}-q\| & \leq & \alpha_n \|u-q\|+(1-\alpha_n)\max \{\|u-q\|, \|x_1-q\|\}\\
& \leq & \alpha_n\max \{\|u-q\|, \|x_1-q\|\}+(1-\alpha_n)\max \{\|u-q\|, \|x_1-q\|\}\\
& = & \max \{\|u-q\|, \|x_1-q\|\}.
\end{eqnarray*}
Thus $(x_n)_{n\in\N}$ is bounded. Consequently, $(A_Tx_n)_{n\in\N}$, $(A_Sx_n)_{n\in\N}$  and $(U_nx_n)_{n\in\N}$ are bounded as well.\\
\textbf{Proof of (i)}
We introduce an auxiliary sequence $$z_{n+1}=\alpha_n u+(1-\alpha_n)A_Tx_n, \, n \in \N$$ and we study its properties and the relationship with the sequence $(x_n)_{n \in \N}$.\\
We shall divide the proof into several steps.\\
\textbf{Step 1.} $\displaystyle\lim_{n \to \infty}\|x_n-z_n\|=0$.\\
\textit{Proof of Step 1.}
Observe that
\begin{equation}\label{z(n+1)-A_T}
\lim_{n\rightarrow\infty}\|z_{n+1}-A_Tx_n\|=\lim_{n\rightarrow\infty}\alpha_n\|u-A_Tx_n\|=0.
\end{equation}
Then we get
\begin{eqnarray}\label{x(n+1)-z(n+1)}
\nonumber\|z_{n+1}-x_{n+1}\| & = & \|\alpha_n
u+(1-\alpha_n)A_Tx_n-\alpha_nu-(1-\alpha_n)Ux_n\|\\
\nonumber& = & (1-\alpha_n)\|A_Tx_n-Ux_n\|\\
\nonumber& = &
(1-\alpha_n)\|A_Tx_n-\beta_nA_Tx_n-(1-\beta_n)A_Sx_n\|\\
& = &
(1-\alpha_n)(1-\beta_n)\|A_Tx_n-A_Sx_n\|.
\end{eqnarray}
Since $\displaystyle\sum_{n=1}^{\infty}(1-\beta_n)<\infty$, we have
\begin{equation}\label{x(n)-z(n)}
\lim_{n\rightarrow\infty}\|x_n-z_n\|=0.
\end{equation}
So, also $(z_n)_{n\in\N}$ is bounded.\\
\textbf{Step 2.} $\displaystyle\lim_{n\rightarrow\infty}\|z_n-A_Tz_n\|=0$.\\
\textit{Proof of Step 2.}
We begin to prove that $\displaystyle\lim_{n\rightarrow\infty}\|x_n-Tx_n\|=0$.\\
Let $p\in Fix(T)=Fix(A_T)$. We have
\begin{eqnarray*}
\|z_{n+1}-p\|^2 & = & \|\alpha_nu+(1-\alpha_n)(1-\delta)x_n+(1-\alpha_n)\delta Tx_n -p\|^2\\
& = & \|[(1-\alpha_n)\delta (Tx_n-x_n)+ x_n-p]+\alpha_n(u-x_n)\|^2\\
\mbox{( by Lemma \ref{Hilbert}) }& \leq & \|(1-\alpha_n)\delta (Tx_n-x_n)+ x_n-p\|^2\\ \nonumber
& + & 2\alpha_n\langle u-x_n, z_{n+1}-p\rangle\\
& \leq & (1-\alpha_n)^2 \delta^2\|Tx_n-x_n\|^2+\|x_n-p\|^2\\ \nonumber
& - & 2(1-\alpha_n)\delta\langle x_n-p, x_n-Tx_n \rangle\\
& + & 2\alpha_n \|u-x_n\|\|z_{n+1}-p\|\\
& = & (1-\alpha_n)^2 \delta^2\|x_n-Tx_n\|^2+\|x_n-p\|^2\\ \nonumber
\mbox{ $((I-T)p=0)$ }& - & 2(1-\alpha_n)\delta\langle x_n-p, (I-T)x_n-(I-T)p \rangle\\
& + & 2\alpha_n \|u-x_n\|\|z_{n+1}-p\|\\
\mbox{( by Lemma \ref{B=I-T}) }& \leq & \|x_n-p\|^2+(1-\alpha_n)^2 \delta^2\|x_n-Tx_n\|^2\\ \nonumber
& - & (1-\alpha_n)\delta\|(I-T)x_n-(I-T)p\|^2\\
& + & 2\alpha_n \|u-x_n\|\|z_{n+1}-p\|\\
& = & \|x_n-p\|^2- (1-\alpha_n)\delta[1-\delta(1-\alpha_n)]\|x_n-Tx_n\|^2\\ \nonumber
& + & 2\alpha_n \|u-x_n\|\|z_{n+1}-p\|
\end{eqnarray*}
and hence
\begin{eqnarray*}
& &(1-\alpha_n) \delta[1-\delta(1-\alpha_n)]\|x_n-Tx_n\|^2-2\alpha_n \|u-x_n\|\|z_{n+1}-p\| \\
& \leq & \|x_n-p\|^2-\|z_{n+1}-p\|^2.
\end{eqnarray*}
Set $$L_n=(1-\alpha_n)
\delta[1-\delta(1-\alpha_n)]\|x_n-Tx_n\|^2-2\alpha_n
\|u-x_n\|\|z_{n+1}-p\|;$$
let us consider the following two cases.\\
a) If $L_n\leq 0$, for all $n\geq n_0$ large enough, then
$$(1-\alpha_n)\delta[1-\delta(1-\alpha_n)]\|x_n-Tx_n\|^2\leq2\alpha_n \|u-x_n\|\|z_{n+1}-p\|.$$
So, since $\displaystyle\lim_{n\rightarrow\infty}\alpha_n=0$ and $\displaystyle\lim_{n\rightarrow\infty}(1-\alpha_n)\delta[1-\delta(1-\alpha_n)]=\delta(1-\delta)$,
$$\lim_{n\rightarrow\infty}\|x_n-Tx_n\|=0.$$

b) Assume now that that there exists a subsequence
$(L_{{n_k}})_{k\in\N}$ of $(L_{n})_{n\in\N}$ taking all its
positive terms; so $L_{n_k}> 0$ for every $k \in \N$, then
\begin{equation} 0<L_{n_k}\leq \|x_{n_k}-p\|^2-\|z_{n_{k}+1}-p\|^2. \label{Lnk} \end{equation}
Summing $(\ref{Lnk})$ from $k=1$ to $N$, we obtain
\begin{eqnarray*}
\sum _{k=1}^N L_{n_k} & \leq & \|x_{n_1}-p\|^2+\sum_{k=1}^{N-1}\big(\|x_{n_{k}+1}-p\|^2-\|z_{n_{k}+1}-p\|^2\big)-\|z_{n_{N}+1}-p\|^2\\
& \leq & \|x_{n_1}-p\|^2+\sum_{k=1}^{N-1}\big(\|x_{n_{k}+1}-p\|+\|z_{n_{k}+1}-p\|\big)\|x_{n_{k}+1}-z_{n_{k}+1}\|\\
& \leq & \|x_{n_1}-p\|^2\\
\mbox{ (by (\ref{x(n+1)-z(n+1)})) }& + & \sum_{k=1}^{N-1} (1-\alpha_{n_k})(1-\beta_{n_k})\big(\|x_{n_{k}+1}-p\|+\|z_{n_{k}+1}-p\|\big)\|A_Tx_{n_k}-A_Sx_{n_k}\|\\
& \leq & \|x_{n_1}-p\|^2+ K \sum_{k=1}^{N-1}(1-\beta_{n_k}),
\end{eqnarray*}
where $K=\sup_{k\in\N}\big\{(\|x_{n_{k}+1}-p\|+\|z_{n_{k}+1}-p\|)\|A_Tx_{n_k}-A_Sx_{n_k}\|\big\}$.\\
Since $\displaystyle\sum_{k=1}^\infty(1-\beta_{n_k})<\infty$,
$$\sum _{k=1}^\infty \big((1-\alpha_{n_k})\delta[1-\delta(1-\alpha_{n_k})]\|x_{n_k}-Tx_{n_k}\|^2-2\alpha_{n_k} \|u-x_{n_k}\|\|z_{n_{k}+1}-p\|\big)<\infty.$$
Thus,
$$\lim_{k\rightarrow\infty}\big((1-\alpha_{n_k})\delta[1-\delta(1-\alpha_{n_k})]\|x_{n_k}-Tx_{n_k}\|^2-2\alpha_{n_k} \|u-x_{n_k}\|\|z_{n_{k}+1}-p\|\big)=0,$$
and since $\displaystyle\lim_{k\rightarrow\infty}\alpha_{n_k}=0$ and $\displaystyle\lim_{k\rightarrow\infty}(1-\alpha_{n_k})\delta[1-\delta(1-\alpha_{n_k})]=\delta(1-\delta),$
also in this case we get
$$\lim_{k\rightarrow\infty}\|x_{n_k}-Tx_{n_k}\|=0.$$ Since the remanent terms of the sequence $\|x_{n}-Tx_{n}\|$ are not positive, from the case a) we can conclude that  $$\lim_{n\rightarrow\infty}\|x_{n}-Tx_{n}\|=0.$$
Consequently,
\begin{eqnarray}\label{x(n)-A_Tx(n)}
\lim_{n\rightarrow\infty}\|x_n-A_Tx_n\| & = & \lim_{n\rightarrow\infty}\|x_n-(1-\delta)x_n-\delta Tx_n\|\\ \nonumber
& = & \lim_{n\rightarrow\infty}\delta \|x_n-Tx_n\|=0.
\end{eqnarray}
Furthermore, from $(\ref{x(n)-z(n)})$ and $(\ref{x(n)-A_Tx(n)})$ and
\begin{eqnarray*}
\|z_n-A_Tz_n\| & \leq & \|z_n-x_n\|+\|x_n-A_Tx_n\|+\|A_Tx_n-A_Tz_n\|\\
& \leq &  \|z_n-x_n\|+\|x_n-A_Tx_n\|+\|x_n-z_n\|
\end{eqnarray*}
we get
\begin{equation}\label{z(n)-A_Tz(n)}
\lim_{n\rightarrow\infty}\|z_n-A_Tz_n\|=0.
\end{equation}
\bigskip

\noindent Now, define the real sequence
\begin{equation}\label{tn}t_n=\sqrt{\|z_n-A_Tz_n\|}, \quad n \in \N. \end{equation} Let $z_{t_n}\in C$ be the unique
fixed point of the contraction $V_{t_n}$ defined su $C$ by
\begin{equation}\label{Vt_n}
V_{t_n}x=t_n u+(1-t_n)A_Tx.
\end{equation}
>From  Browder's Theorem \ref{browder}, $\displaystyle
\lim_{n\rightarrow\infty} z_{t_n}=p_0\in Fix(A_T)$; now we prove
that:\\ \textbf{Step 3.}
$\displaystyle\limsup_{n\rightarrow\infty}\langle u-p_0, z_n-p_0\rangle\leq 0$.\\
\textit{Proof of Step 3.} From $(\ref{Vt_n})$, we have
$$z_{t_n}-z_n=t_n(u-z_n)+(1-t_n)(A_Tz_{t_n}-z_n).$$
We compute
\begin{eqnarray*}
\|z_{t_n}-z_n\|^2 & = & \|t_n(u-z_n)+(1-t_n)(A_Tz_{t_n}-z_n)\|^2\\
\mbox{ (by Lemma \ref{Hilbert}) }& \leq & (1-t_n)^2\|A_Tz_{t_n}-z_n\|^2+2t_n \langle u-z_n, z_{t_n}-z_n\rangle\\
& \leq & (1-t_n)^2\big(\|A_Tz_{t_n}-A_Tz_n\|+\|A_Tz_n-z_n\|\big)^2\\
& + & 2t_n \langle u-z_n, z_{t_n}-z_n\rangle\\
& = & (1-t_n)^2\big[\|A_Tz_{t_n}-A_Tz_n\|^2+\|A_Tz_n-z_n\|^2\\
& + & 2\|A_Tz_n-z_n\|\|A_Tz_{t_n}-A_Tz_n\|\big]\\
& + & 2t_n \langle u-z_{t_n}, z_{t_n}-z_n\rangle+2t_n \langle z_{t_n}-z_n, z_{t_n}-z_n\rangle\\
\mbox{ ($A_T$ nonexpansive) }& \leq & (1-t_n)^2\big[\|z_{t_n}-z_n\|^2+\|A_Tz_n-z_n\|^2\\
& + & 2\|A_Tz_n-z_n\|\|z_{t_n}-z_n\|\big]\\
& + & 2t_n \|z_{t_n}-z_n\|^{2}+2t_n \langle u-z_{t_n}, z_{t_n}-z_n\rangle\\
& = & (1+t_n^2)\|z_{t_n}-z_n\|^2\\
& + & \|A_Tz_n-z_n\|\big(\|A_Tz_n-z_n\|+2\|z_{t_n}-z_n\|\big)\\
& + & 2t_n \langle u-z_{t_n}, z_{t_n}-z_n\rangle.
\end{eqnarray*}

Hence
\begin{eqnarray*}
\langle u-z_{t_n}, z_n -z_{t_n}\rangle & \leq & \frac{t_n}{2}\|z_{t_n}-z_n\|^2\\
& + & \frac{\|A_Tz_n-z_n\|}{2t_n}\big(\|A_Tz_n-z_n\|+2\|z_{t_n}-z_n\|\big).
\end{eqnarray*}
>From (\ref{tn}) and by the boundedness of $(z_{t_n})_{n\in\N}$,
$(z_n)_{n\in\N}$ and $(A_Tz_n)_{n\in\N}$  we have
\begin{equation}\label{limsupT(1)}
\limsup_{n\rightarrow\infty}\langle u-z_{t_n}, z_n -z_{t_n} \rangle\leq 0.
\end{equation}
Furthermore,
\begin{eqnarray}\label{limsupT(2)}
\nonumber\langle u-z_{t_n}, z_n -z_{t_n} \rangle & =& \langle
u-p_0, z_n
-z_{t_n} \rangle+\langle p_0-z_{t_n}, z_n -z_{t_n} \rangle\\
 \nonumber& =& \langle u-p_0, z_n -p_0 \rangle+\langle u-p_0, p_0-z_{t_n} \rangle +\langle p_0-z_{t_n}, z_n -z_{t_n} \rangle\\
\end{eqnarray}
Since $\displaystyle \lim_{n\rightarrow\infty} z_{t_n}=p_0\in Fix(A_T)$, we
get
\begin{equation}\label{limsupT(3)}
\lim_{n\rightarrow\infty}\langle p_0-z_{t_n}, z_n -z_{t_n} \rangle
=\lim_{n\rightarrow\infty}\langle u-p_0, p_0-z_{t_n} \rangle=0.
\end{equation}
We conclude from $(\ref{limsupT(1)})$, $(\ref{limsupT(2)})$ and
$(\ref{limsupT(3)})$
\begin{equation}\label{limsupT(5)}
\limsup_{n\rightarrow\infty}\langle u-p_0, z_n -p_0 \rangle\leq 0.
\end{equation}
\textbf{Step 4.} $(z_n)_{n\in\N}$ converges strongly to $p_0\in Fix(T)$.\\
\textit{Proof of Step 4.}
We compute
\begin{eqnarray*}
\|z_{n+1}-p_0\|^2 & = & \|\alpha_n (u-p_0)+(1-\alpha_n)(A_Tx_n-p_0)\|^2\\
\mbox{ (by Lemma \ref{Hilbert}) }& \leq & (1-\alpha_n)^2\|A_Tx_n-p_0\|^2+2\alpha_n\langle u-p_0, z_{n+1} -p_0 \rangle\\
\mbox{ ($A_T$ nonexpansive) }& \leq & (1-\alpha_n)\|x_n-p_0\|^2+2\alpha_n\langle u-p_0, z_{n+1} -p_0 \rangle\\
& \leq & (1-\alpha_n)\big(\|x_n-z_n\|+\|z_n-p_0\|\big)^2\\
& + & 2\alpha_n\langle u-p_0, z_{n+1} -p_0 \rangle\\
& \leq & (1-\alpha_n)\|x_n-z_n\|^2+(1-\alpha_n)\|z_n-p_0\|^2\\
& + & 2(1-\alpha_n)\|x_n-z_n\|\|z_n-p_0\|\\
& + & 2\alpha_n\langle u-p_0, z_{n+1} -p_0 \rangle\\
& \leq & (1-\alpha_n)\|z_n-p_0\|^2\\
\mbox{ (by (\ref{x(n+1)-z(n+1)})) }& + &  (1-\alpha_n)(1-\alpha_{n-1})^2 (1-\beta_{n-1})^2\|A_Tx_{n-1}-A_Sx_{n-1}\|^2\\
& + & 2(1-\alpha_n)(1-\alpha_{n-1}) (1-\beta_{n-1})\|A_Tx_{n-1}-A_Sx_{n-1}\|\|z_n-p_0\|\\
& + & 2\alpha_n\langle u-p_0, z_{n+1} -p_0 \rangle\\ \nonumber
& \leq & (1-\alpha_n)\|z_n-p_0\|^2+  (1-\beta_{n-1})\|A_Tx_{n-1}-A_Sx_{n-1}\|^2\\
& + & 2 (1-\beta_{n-1})\|A_Tx_{n-1}-A_Sx_{n-1}\|\|z_n-p_0\|\\
& + &  2\alpha_n\langle u-p_0, z_{n+1} -p_0 \rangle\\
& \leq & (1-\alpha_n)\|z_n-p_0\|^2+ M (1-\beta_{n-1})\\
& + & 2\alpha_n\langle u-p_0, z_{n+1} -p_0 \rangle,
\end{eqnarray*}
where $M:=\sup_{n\in\N}\big\{\|A_Tx_{n-1}-A_Sx_{n-1}\|^{2}+2\|A_Tx_{n-1}-A_Sx_{n-1}\|\|z_n-p_0\|\big \}$.\\
Since by hypothesis $\displaystyle\sum_{n=1}^\infty
\alpha_n=\infty$ and $\displaystyle
\sum_{n=1}^\infty(1-\beta_n)<\infty$,  from $(\ref{limsupT(5)})$
we can apply
 Lemma $\ref{Xu}$ and conclude that  $$\lim_{n\rightarrow\infty}\|z_{n+1}-p_0\|=0.$$
By $\displaystyle \lim_{n \to \infty}\|x_n-z_n\|= 0$, we have
$$\lim_{n\rightarrow\infty}\|x_n-p_0\|=0.$$
Hence, $(x_n)_{n\in\N}$ converges strongly to $p_0\in Fix(T)$.
\end{proof}
\smallskip

\noindent $\textbf{Proof of (ii)}$\\
Again we introduce an other auxiliary sequence
\begin{equation}\label{s(n+1)}
s_{n+1}=\alpha_{n}u+(1-\alpha_{n})A_Sx_{n},
\end{equation}
and we study its properties and the relationship with the sequence
$(x_n)_{n \in \N}$.\\ Recall that $A_S=(1-\delta)I+\delta S$, with $\delta\in(0,1)$.\\
We shall divide the proof into several steps.\\

\begin{proof}
$\textbf{Step 1.}$ $\displaystyle\lim_{n \to \infty}\|x_{n}-s_{n}\|= 0$.\\
\textit{Proof of Step 1.}
We observe that
\begin{equation}\lim_{n\rightarrow\infty}\left\|s_{n+1}-A_Sx_{n}\right\|=\lim_{n\rightarrow\infty}\alpha_{n}\left\|u-A_Sx_{n}\right\|=0. \label{s(n+1)-A_Sx(n)} \end{equation}
We compute
\begin{eqnarray}\label{x(n+1)-s(n+1)}
\nonumber\|x_{n+1}-s_{n+1}\| & = &
\|\alpha_{n}u+(1-\alpha_{n})Ux_{n}-\alpha_{n}u-(1-\alpha_{n})A_Sx_{n}\|\\
\nonumber& = & (1-\alpha_{n})\|Ux_{n}-A_Sx_{n}\|\\ \nonumber & = &
(1-\alpha_{n})\|\beta_{n}A_Tx_{n}+(1-\beta_{n})A_Sx_{n}-A_Sx_{n}\|\\
 & = & (1-\alpha_{n})\beta_{n}\|A_Tx_{n}-A_Sx_{n}\|.
\end{eqnarray}
Since $\sum_{n=1}^\infty\beta_n<\infty$, \begin{equation} \lim_{n\rightarrow\infty}\left\|x_{n}-s_{n}\right\|=0. \label{x(n)-s(n)} \end{equation}
This  shows that also $(s_{n})_{n\in\N}$ is bounded.

$\textbf{Step 2.}$ $\displaystyle\lim_{n\rightarrow\infty}\|x_n-A_Sx_n\|=0$.\\
\textit{Proof of Step 2.}
We begin to prove that $\displaystyle\lim_{n\rightarrow\infty}\|x_n-Sx_n\|=0$.\\

Let $p\in Fix(S)=Fix(A_S)$.
We compute

\begin{eqnarray*}
\|s_{n+1}-p\|^2 & = & \|\alpha_nu+(1-\alpha_n)(1-\delta)x_n+(1-\alpha_n)\delta Sx_n -p\|^2\\
& = & \|[(1-\alpha_n)\delta (Sx_n-x_n)+ x_n-p]+\alpha_n(u-x_n)\|^2\\
\mbox{( by Lemma \ref{Hilbert}) }& \leq & \|(1-\alpha_n)\delta (Sx_n-x_n)+ x_n-p\|^2\\
& + & 2\alpha_n\langle u-x_n, s_{n+1}-p\rangle\\
& \leq & (1-\alpha_n)^2 \delta^2\|Sx_n-x_n\|^2+\|x_n-p\|^2\\
& - & 2(1-\alpha_n)\delta\langle x_n-p, x_n-Sx_n \rangle\\
& + & 2\alpha_n \|u-x_n\|\|s_{n+1}-p\|\\
& = & (1-\alpha_n)^2 \delta^2\|x_n-Sx_n\|^2+\|x_n-p\|^2\\
\mbox{ $((I-S)p=0)$ }& - & 2(1-\alpha_n)\delta\langle x_n-p, (I-S)x_n-(I-S)p \rangle\\
& + & 2\alpha_n \|u-x_n\|\|s_{n+1}-p\|\\
\mbox{( by Lemma \ref{I-S}) }& \leq & \|x_n-p\|^2+(1-\alpha_n)^2 \delta^2\|x_n-Sx_n\|^2\\
& -& 2(1-\alpha_n)\delta\bigg[\|(I-S)x_n-(I-S)p\|^2\\
& - & \frac{1}{2}\bigg(\|x_n-Sx_n\|^2+\|p-Sp\|^2\bigg)\bigg]\\
& + & 2\alpha_n \|u-x_n\|\|s_{n+1}-p\|\\
& = & \|x_n-p\|^2+(1-\alpha_n)^2 \delta^2\|x_n-Sx_n\|^2\\
& - & (1-\alpha_n)\delta \|x_n- Sx_n\|^2+ 2\alpha_n \|u-x_n\|\|s_{n+1}-p\|\\
& = & \|x_n-p\|^2- (1-\alpha_n)\delta[1-\delta(1-\alpha_n)]\|x_n-Sx_n\|^2\\
& + & 2\alpha_n \|u-x_n\|\|s_{n+1}-p\|
\end{eqnarray*}
and hence
\begin{eqnarray*}
& &(1-\alpha_n) \delta[1-\delta(1-\alpha_n)]\|x_n-Sx_n\|^2-2\alpha_n \|u-x_n\|\|s_{n+1}-p\| \\
& \leq & \|x_n-p\|^2-\|s_{n+1}-p\|^2.
\end{eqnarray*}
Set $$L_n=(1-\alpha_n)
\delta[1-\delta(1-\alpha_n)]\|x_n-Sx_n\|^2-2\alpha_n
\|u-x_n\|\|s_{n+1}-p\|;$$
let us consider the following two cases.\\
a) If $L_n\leq 0$, for all $n\geq n_0$ large enough, then
$$(1-\alpha_n)\delta[1-\delta(1-\alpha_n)]\|x_n-Sx_n\|^2\leq2\alpha_n \|u-x_n\|\|s_{n+1}-p\|.$$
So, since $\displaystyle\lim_{n\rightarrow\infty}\alpha_n=0$ and $\displaystyle\lim_{n\rightarrow\infty}(1-\alpha_n)\delta[1-\delta(1-\alpha_n)]=\delta(1-\delta)$,
$$\lim_{n\rightarrow\infty}\|x_n-Sx_n\|=0.$$

b) Assume now that there exists a subsequence
$(L_{{n_k}})_{k\in\N}$ of $(L_{n})_{n\in\N}$ taking all its
positive terms; so $L_{n_k}> 0$ for every $k \in \N$, then
\begin{equation}
 0<L_{n_k}\leq \|x_{n_k}-p\|^2-\|s_{n_{k}+1}-p\|^2. \label{LnkS}
\end{equation}
Summing $(\ref{LnkS})$ from $k=1$ to $N$, we obtain
\begin{eqnarray*}
\sum _{k=1}^N L_{n_k} & \leq & \|x_{n_1}-p\|^2+\sum_{k=1}^{N-1}\big(\|x_{n_{k}+1}-p\|^2-\|s_{n_k+1}-p\|^2\big)-\|s_{n_{N}+1}-p\|^2\\
& \leq & \|x_{n_1}-p\|^2+\sum_{k=1}^{N-1}\big(\|x_{n_k+1}-p\|+\|s_{n_k+1}-p\|\big)\|x_{n_k+1}-s_{n_k+1}\|\\
& \leq & \|x_{n_1}-p\|^2\\
\mbox{ (by(\ref{x(n+1)-s(n+1)})) }& + & \sum_{k=1}^{N-1} (1-\alpha_{n_k})\beta_{n_k}\big(\|x_{n_k+1}-p\|+\|s_{n_k+1}-p\|\big)\|A_Tx_{n_k}-A_Sx_{n_k}\|\\
& \leq & \|x_{n_1}-p\|^2+ K \sum_{k=1}^{N-1}\beta_{n_k},
\end{eqnarray*}
where $K=\sup_{k\in\N}\big\{(\|x_{n_k+1}-p\|+\|s_{n_k+1}-p\|)\|A_Tx_{n_k}-A_Sx_{n_k}\|\big\}$.\\
Since $\displaystyle\sum_{k=1}^\infty\beta_{n_k}<\infty$,
$$\sum _{k=1}^\infty \big((1-\alpha_{n_k})\delta[1-\delta(1-\alpha_{n_k})]\|x_{n_k}-Sx_{n_k}\|^2-2\alpha_{n_k} \|u-x_{n_k}\|\|s_{n_k+1}-p\|\big)<\infty.$$
Thus,
$$\lim_{k\rightarrow\infty}\big((1-\alpha_{n_k})\delta[1-\delta(1-\alpha_{n_k})]\|x_{n_k}-Sx_{n_k}\|^2-2\alpha_{n_k} \|u-x_{n_k}\|\|s_{n_k+1}-p\|\big)=0,$$
and since $\displaystyle\lim_{k\rightarrow\infty}\alpha_{n_k}=0$ and $\displaystyle\lim_{k\rightarrow\infty}(1-\alpha_{n_k})\delta[1-\delta(1-\alpha_{n_k})]=\delta(1-\delta),$
also in this case we get
$$\lim_{k\rightarrow\infty}\|x_{n_k}-Sx_{n_k}\|=0.$$ As in i), we can conclude that  $$\lim_{n\rightarrow\infty}\|x_{n}-Sx_{n}\|=0.$$
Consequently,
\begin{eqnarray}\label{x(n)-A_Sx(n)}
\nonumber\lim_{n\rightarrow\infty}\|x_n-A_Sx_n\| & = & \lim_{n\rightarrow\infty}\|x_n-(1-\delta)x_n-\delta Sx_n\|\\
& = & \lim_{n\rightarrow\infty}\delta \|x_n-Sx_n\|=0.
\end{eqnarray}
Moreover, from $(\ref{x(n)-s(n)})$ and $(\ref{x(n)-A_Sx(n)})$,
\begin{equation*}
\lim_{n\rightarrow\infty}\|s_n-A_Ss_n\|=\lim_{n\rightarrow\infty}\delta\|s_n-Ss_n\|=0.
\end{equation*}

$\textbf{Step 3.}$ $\displaystyle\limsup_{n\rightarrow\infty}\left\langle u-P_{Fix(S)}u,s_{n}-P_{Fix(S)}u\right\rangle\leq0$.\\
\textit{Proof of Step 4.} We may assume without loss of generality
that there exists a subsequence $(s_{n_{j}})_{j\in\N}$ of
$(s_{n})_{n\in\N}$ such that $s_{n_{j}}\rightharpoonup v$ and
\begin{eqnarray*}\limsup_{n\rightarrow\infty}\left\langle
u-P_{Fix(S)}u,s_{n}-P_{Fix(S)}u\right\rangle&=&\lim_{j\rightarrow\infty}\left\langle
u-P_{Fix(S)}u,s_{n_{j}}-P_{Fix(S)}u\right\rangle\\&=&\left\langle
u-P_{Fix(S)}u,v-P_{Fix(S)}u\right\rangle\end{eqnarray*} Since
$\displaystyle\lim_{n\rightarrow\infty}\|s_n-Ss_n\|=0$ and from
$I-S$ is demiclosed at $0$, $v\in Fix(S)=Fix(A_S)$. Then by
$(\ref{proiezione})$, we have
\begin{equation}\label{limsup S}\limsup_{n\rightarrow\infty}\left\langle u-P_{Fix(S)}u,s_{n}-P_{Fix(S)}u\right\rangle=\left\langle
u-P_{Fix(S)}u,v-P_{Fix(S)}u\right\rangle\leq0.
\end{equation}

$\textbf{Step 4.}$  $(s_n)_{n\in\N}$ converges strongly to $P_{Fix(S)}u$.\\
\textit{Proof of Step 5.}
We compute
\begin{eqnarray*}
\|s_{n+1}-P_{Fix(S)}u\|^2 & = & \|\alpha_n (u-P_{Fix(S)}u)+(1-\alpha_n)(A_Sx_n-P_{Fix(S)}u)\|^2\\
\mbox{ (by Lemma \ref{Hilbert}) }& \leq & (1-\alpha_n)^2\|A_Sx_n-P_{Fix(S)}u\|^2\\
& + &2\alpha_n\langle u- P_{Fix(S)}u, s_{n+1} -P_{Fix(S)}u \rangle\\
\mbox{ ($A_S$ nonexpansive) }& \leq & (1-\alpha_n)\|x_n-P_{Fix(S)}u\|^2\\
& + &2\alpha_n\langle u- P_{Fix(S)}u, s_{n+1} -P_{Fix(S)}u \rangle\\
& \leq & (1-\alpha_n)\big(\|x_n-s_n\|+\|s_n-P_{Fix(S)}u\|\big)^2\\
& + &2\alpha_n\langle u- P_{Fix(S)}u, s_{n+1} -P_{Fix(S)}u \rangle\\
& \leq & (1-\alpha_n)\|x_n-s_n\|^2+(1-\alpha_n)\|s_n-P_{Fix(S)}u\|^2\\
& + & 2(1-\alpha_n)\|x_n-s_n\|\|s_n-P_{Fix(S)}u\|\\
& + & 2\alpha_n\langle u-P_{Fix(S)}u, s_{n+1} -P_{Fix(S)}u \rangle\\
& \leq & (1-\alpha_n)\|s_n-P_{Fix(S)}u\|^2\\
\mbox{ by (\ref{x(n+1)-s(n+1)}) }& + &  (1-\alpha_n)(1-\alpha_{n-1})^2 \beta_{n-1}^2\|A_Tx_{n-1}-A_Sx_{n-1}\|^2\\
& + & 2(1-\alpha_n)(1-\alpha_{n-1}) \beta_{n-1}\|A_Tx_{n-1}-A_Sx_{n-1}\|\|s_n-P_{Fix(S)}u\|\\
 & + & 2\alpha_n\langle u-P_{Fix(S)}u, s_{n+1} -P_{Fix(S)}u \rangle\\
& \leq & (1-\alpha_n)\|s_n-P_{Fix(S)}u\|^2+  \beta_{n-1}\|A_Tx_{n-1}-A_Sx_{n-1}\|^2\\
& + & 2 \beta_{n-1}\|A_Tx_{n-1}-A_Sx_{n-1}\|\|s_n-P_{Fix(S)}u\|\\
& + &  2\alpha_n\langle u-P_{Fix(S)}u, s_{n+1} -P_{Fix(S)}u \rangle\\
& \leq & (1-\alpha_n)\|s_n-P_{Fix(S)}u\|^2+ M \beta_{n-1}\\
& + & 2\alpha_n\langle u-P_{Fix(S)}u, s_{n+1} -P_{Fix(S)}u \rangle,
\end{eqnarray*}
where $M:=\sup_{n\in\N}\big\{\|A_Tx_{n-1}-A_Sx_{n-1}\|^2+2\|A_Tx_{n-1}-A_Sx_{n-1}\|\|s_n-P_{Fix(S)}u\|\big \}$.\\
Since $\displaystyle\sum_{n=1}^\infty \alpha_n=\infty$ and $\displaystyle \sum_{n=1}^\infty\beta_n<\infty$ and from $(\ref{limsup S})$  we can apply
 Lemma $\ref{Xu}$ and we conclude that  $$\lim_{n\rightarrow\infty}\|s_{n+1}-P_{Fix(S)}u\|=0.$$ So, $(s_n)_{n\in\N}$ converges strongly to $P_{Fix(S)}u\in Fix(S)$.
Since $\displaystyle\lim_{n \to \infty}\|x_n-s_n\|= 0$, we have
$$\lim_{n\rightarrow\infty}\|x_n-P_{Fix(S)}u\|=0,$$
i.e.  $(x_n)_{n\in\N}$ converges strongly to $P_{Fix(S)}u\in
Fix(S)$.
\end{proof}

$\textbf{Proof of (iii)}$ \\
\begin{proof}
Let $q\in Fix(S)\cap Fix(T)$.\\
Since in this last case, the techniques used in i) and ii) fail, we turn our attention on the monotony of the sequence $(\|x_n-q\|)_{n \in \N}$.
We consider the following two cases.\\
\begin{enumerate}
\item [\textbf{Case 1.}]$\|x_{n+1}-q\|\leq\|x_n-q\|, \mbox{ for every } n\geq n_0$ large enough.\\
\item [\textbf{Case 2.}] There exists a subsequence
$(\|x_{n_j}-q\|)_{j\in\N}$ of $(\|x_{n}-q\|)_{n\in\N}$
 such that $$\|x_{n_j}-q\|<\|x_{n_j+1}-q\| \mbox{ for all
 }j\in\N.$$
\end{enumerate}
\smallskip

\noindent

 \begin{enumerate}
\item [\textbf{Case 1.}]
$\displaystyle\lim_{n\rightarrow\infty}\|x_n-q\|$ exists finite
and hence
\begin{equation}\label{(x_n+1-q)-(x_n-q)}
\lim_{n\rightarrow\infty}(\|x_{n+1}-q\|-\|x_n-q\|)=0.
\end{equation}
We shall divide the proof into several steps.\\
$\textbf{Step 1.}$   $\displaystyle\lim_{n\rightarrow\infty}\|x_n-A_Sx_n\|=0$.\\
\textit{Proof of Step 1.}
Consider
\begin{equation}\label{x(n+1)}
x_{n+1}=\alpha_n u+(1-\alpha_n)\beta_n
\big(A_Tx_n+(1-\beta_n)A_Sx_n\big).
\end{equation} We compute
\begin{eqnarray*}
\|\beta_n(A_Tx_n-q)+(1-\beta_n)(A_Sx_n-q)\|^2 & = & \beta_n\|A_Tx_n-q\|^2\\
& + & (1-\beta_n)\|A_Sx_n-q\|^2\\
& - & \beta_n(1-\beta_n)\|A_Tx_n-A_Sx_n\|^2\\
\mbox{ ($A_T$ nonexpansive and by (\ref{A_s quasi-firmly})) }& \leq & \beta_n\|x_n-q\|^2+(1-\beta_n)\|x_n-q\|^2\\
& - & (1-\beta_n)(1-\delta)\|x_n-A_Sx_n\|^2\\
& - & \beta_n(1-\beta_n)\|A_Tx_n-A_Sx_n\|^2\\
& = & \|x_n-q\|^2-(1-\beta_n)(1-\delta)\|x_n-A_Sx_n\|^2\\
& - & \beta_n(1-\beta_n)\|A_Tx_n-A_Sx_n\|^2
\end{eqnarray*}
We recall that $U_n=\beta_n A_T+(1-\beta_n)A_S$. \\So, we get
\begin{equation}\label{Ux(n)-q}
\|U_nx_n-q\|^2
\leq\|x_n-q\|^2-(1-\beta_n)(1-\delta)\|x_n-A_Sx_n\|^2-\beta_n(1-\beta_n)\|A_Tx_n-A_Sx_n\|^2.
\end{equation}
We have
\begin{eqnarray}\label{x(n+1)-q}
\nonumber\|x_{n+1}-q\|^2 & = & \|U_nx_n-q+\alpha_n(u-U_nx_n)\|^2\\
 \nonumber& \leq & \|U_nx_n-q\|^2+\alpha_n(\alpha_n \|u-U_nx_n\|^2+2\|U_nx_n-q\|\|u-U_nx_n\|)\\
\nonumber\mbox{ (by (\ref{Ux(n)-q})) }& \leq & \|x_n-q\|^2-(1-\beta_n)(1-\delta)\|x_n-A_Sx_n\|^2\\
& - & \beta_n(1-\beta_n)\|A_Tx_n-A_Sx_n\|^2+\alpha_n M,
\end{eqnarray}
where $M:=\displaystyle\sup_{n\in\N}\big\{\alpha_n
\|u-U_nx_n\|^2+2\|U_nx_n-q\|\|u-U_nx_n\|\big\}$. From
$(\ref{x(n+1)-q})$, we derive
$$\|x_{n+1}-q\|^2 \leq \|x_n-q\|^2-(1-\beta_n)(1-\delta)\|x_n-A_Sx_n\|^2+\alpha_n M,$$
hence
\begin{equation}\label{song}
(1-\beta_n)(1-\delta)\|x_n-A_Sx_n\|^2\leq\|x_n-q\|^2-\|x_{n+1}-q\|^2+\alpha_n M.
\end{equation}
>From (\ref{(x_n+1-q)-(x_n-q)}) and  $\displaystyle \lim_{n\rightarrow\infty}\alpha_n=0$, we get
$$\lim_{n \to \infty}((1-\beta_n)(1-\delta)\|x_n-A_Sx_n\|^2)=0.$$
Since $\displaystyle \liminf_{n\to \infty}\beta_n(1-\beta_n)>0$, we have
\begin{equation}
\lim_{n\rightarrow\infty}\|x_{n}-A_Sx_{n}\|=\lim_{n\rightarrow\infty} \delta \|x_n-Sx_n\|=0. \label{U_n
x(n)-A_Sx(n)} \end{equation}

$\textbf{Step 2.}$   $\displaystyle\lim_{n \to \infty}\|A_Tx_n-A_Sx_n\|= 0$.\\
\textit{Proof of Step 2.}
Moreover, from $(\ref{x(n+1)-q})$, we also can derive
$$\|x_{n+1}-q\|^2  \leq  \|x_n-q\|^2-\beta_n(1-\beta_n)\|A_Tx_n-A_Sx_n\|^2+\alpha_n M,$$
hence
\begin{equation*}
\beta_n(1-\beta_n)\|A_Tx_n-A_Sx_n\|^2\leq\|x_n-q\|^2-\|x_{n+1}-q\|^2+\alpha_n M.
\end{equation*}
As above,  we can conclude that\\
\begin{equation}\label{A_T-A_S}
\lim_{n\rightarrow\infty}\|A_Tx_n-A_Sx_n\|=\lim_{n\rightarrow\infty}\delta\|Tx_n-Sx_n\|=0.
\end{equation}
>From $(\ref{U_n x(n)-A_Sx(n)})$ and $(\ref{A_T-A_S})$, it  follows that
\begin{equation}\label{xn-Txn U}
\lim_{n\to\infty}\|x_n-Tx_n\|=0.
\end{equation}
Let $F=Fix(T)\cap Fix(S)$. \\
$\textbf{Step 3.}$ $\limsup_{n\rightarrow\infty}\left\langle
u-P_Fu,x_{n}-P_Fu\right\rangle\leq0$.\\
\textit{Proof of Step 3.} We may assume without loss of generality
that there exists a subsequence $(x_{n_{j}})_{j\in\N}$ of
$(x_{n})_{n\in\N}$ such that $x_{n_{j}}\rightharpoonup v$ and
\begin{eqnarray}\label{limsup U}
\nonumber\limsup_{n\rightarrow\infty}\left\langle
u-P_{F}u,x_{n}-P_{F}u\right\rangle & = &
\lim_{j\rightarrow\infty}\left\langle
u-P_{F}u,x_{n_{j}}-P_{F}u\right\rangle \\ & = & \left\langle
u-P_{F}u,v-P_{F}u\right\rangle.
\end{eqnarray}
By $(\ref{xn-Txn U})$ and (\ref{U_n
x(n)-A_Sx(n)}) and by the demiclosedness of
$I-T$ at $0$ and of $I-S$ at $0$, $v\in F=Fix(T)\cap Fix(S)$. Then we can conclude
that
\begin{eqnarray*}
\nonumber\limsup_{n\rightarrow\infty}\left\langle
u-P_{F}u,x_{n}-P_{F}u\right\rangle &=&\left\langle
u-P_{F}u,v-P_{F}u\right\rangle\leq 0.
\end{eqnarray*}

$\textbf{Step 4.}$  $(x_n)_{n\in\N}$ converges strongly to $P_{F}u$.\\
\textit{Proof of Step 4. } We compute
\begin{eqnarray}\label{x_n Xu}
\nonumber \|x_{n+1}-P_{F}u\|^2 & =& \|\alpha_n (u-P_{F}u)+(1-\alpha_n)(U_nx_n-P_{F}u)\|^2\\
\nonumber \mbox{ (by Lemma \ref{Hilbert}) }& \leq & (1-\alpha_n)^2\|U_nx_n-P_{F}u\|^2\\
\nonumber & + & 2\alpha_n\langle u-P_{F}u, x_{n+1} -P_{F}u \rangle\\
\nonumber \mbox{ ($U_n$ quasi-nonexpansive) }& \leq & (1-\alpha_n)\|x_n-P_{F}u\|^2\\
& + & 2\alpha_n\langle u-P_{F}u, x_{n+1} -P_{F}u \rangle.
\end{eqnarray}
Since  $\displaystyle\sum_{n=1}^\infty \alpha_n=\infty$, we can apply
 Lemma $\ref{Xu}$ and conclude that  $$\lim_{n\rightarrow\infty}\|x_{n+1}-P_{F}u\|=0.$$
Finally, $(x_n)_{n\in\N}$ converges strongly to $P_{F}u$.\\
\smallskip

\noindent

\item [\textbf{Case 2.}] Let $q=P_Fu$. Then there exists a subsequence
$(\|x_{n_j}-P_Fu\|)_{j\in\N}$ of $(\|x_{n}-P_Fu\|)_{n\in\N}$
 such that $$\|x_{n_j}-P_Fu\|<\|x_{n_j+1}-P_Fu\|\mbox{ for all }j\in\N.$$ By Lemma \ref{LemMaing},
 there exists a strictly increasing sequence $(m_k)_{k\in\N}$ of positive integers such that $\displaystyle\lim_{k \to \infty}m_k=+\infty$ and the following properties are satisfied by all numbers $k\in\N$:
 \begin{equation}\label{LemMaing}
 \|x_{m_k}-P_Fu\|\leq\|x_{m_k+1}-P_Fu\| \quad \mbox{and} \quad \|x_k-P_Fu\|\leq\|x_{m_k+1}-P_Fu\|.
 \end{equation}
Consequently,
\begin{eqnarray*}
\nonumber 0 & \leq & \lim_{k \to \infty}\big(\|x_{m_k+1}-P_Fu\|-\|x_{m_k}-P_Fu\|\big)\\
\nonumber & \leq & \limsup_{n \to \infty}\big(\|x_{n+1}-P_Fu\|-\|x_{n}-P_Fu\|\big)\\
\nonumber \mbox{ (by (\ref{x_n-q})) } & \leq & \limsup_{n\to \infty}\big( \alpha_n\|u-P_Fu\|+(1-\alpha_n)\|x_n-P_Fu\|-\|x_n-P_Fu\|\big)\\
\mbox{ $(\alpha_n\rightarrow 0)$ }&  = &  \limsup_{n\to \infty} \alpha_n\big(\|u-P_Fu\|-\|x_n-P_Fu\|\big)=0.
\end{eqnarray*}
Hence,
\begin{equation}\label{(xn+1-q)-(x_n-q)2}
\lim_{k \to \infty}\big(\|x_{m_k+1}-P_Fu\|-\|x_{m_k}-P_Fu\|\big)=0.
\end{equation}
As in the \textbf{Case 1.}, we can prove that
$$
\lim_{k\rightarrow\infty}\|x_{m_k}-Sx_{m_k}\|=\lim_{k\rightarrow\infty}\|x_{m_k}-Tx_{m_k}\|=0$$
and by the demiclosedness of $I-T$ at $0$ and of $I-S$ at $0$, we obtain that
\begin{equation}\label{x_m Xu}\limsup_{k\rightarrow\infty}\left\langle
u-P_Fu,x_{m_k}-P_Fu\right\rangle\leq0.\end{equation}  We replace in
(\ref{x_n Xu})  $n$ with $m_k$, then
\begin{equation*}
\|x_{m_k+1}-P_Fu\|^2\leq (1-\alpha_{m_k})\|x_{m_k}-P_Fu\|^2+2\alpha_{m_k}\langle u-P_Fu, x_{m_k+1}-P_Fu\rangle.
\end{equation*}
In particular, we get
\begin{eqnarray}\label{x_mk-P_Fu}
\nonumber\alpha_{m_k}\|x_{m_k}-P_Fu\|^2 & \leq &  \|x_{m_k}-P_Fu\|^2-\|x_{m_k+1}-P_Fu\|^2\\
\nonumber & + & 2\alpha_{m_k}\langle u-P_Fu, x_{m_k+1}-P_Fu\rangle\\
\mbox{ (by (\ref{LemMaing})) }& \leq & 2\alpha_{m_k}\langle u-P_Fu, x_{m_k+1}-P_Fu\rangle.
\end{eqnarray}
Then, from (\ref{x_m Xu}), we obtain
\begin{equation*}
\limsup_{k\to\infty}\|x_{m_k}-P_Fu\|^2 \leq 2\limsup_{k\to\infty}\langle u-P_Fu, x_{m_k+1}-P_Fu\rangle\leq 0.
\end{equation*}
Thus, from (\ref{LemMaing}) and (\ref{(xn+1-q)-(x_n-q)2}), we conclude that
$$\limsup_{k\to\infty}\|x_{k}-P_Fu\|^2\leq\limsup_{k\to\infty}\|x_{m_k+1}-P_Fu\|^2=0,$$
i.e., $(x_n)_{n\in\N}$ converges strongly to $P_F u$.
\end{enumerate}
\end{proof}

\begin{rem}
The inequality (\ref{song}) plays a crucial role in the proof of
$(iii)$ as the similar inequality (3.3) in Theorem 3.1  of
\cite{Song-Chai}. In fact in both proofs by these inequalities some important properties of the sequence follow.\\
We remark that our tools are different from theirs because the
techniques used in \cite{Song-Chai} seem questionable.
\end{rem}


\end{document}